\newcommand{\R}{\mathbb{R}}
\newcommand{\dHaus}{\, \mathrm{d} \Gamma}
\newcommand{\dd}{\, \mathrm{d} }
\newcommand{\dx}{\, \mathrm{dx}}
\newcommand{\dt}{\, \mathrm{dt}}
\newcommand{\ds}{\, \mathrm{ds}}
\newcommand{\pd}{\partial}
\newcommand{\pdnu}{ \pd_{\bm{\nu}}}
\newcommand{\abs}[1]{\left| #1 \right|}
\newcommand{\norm}[1]{\| #1 \|}
\newcommand{\bignorm}[1]{\left\| #1 \right\|}
\newcommand{\inner}[2]{\langle #1 , #2 \rangle}
\newcommand{\Laplace}{\Delta}
\newcommand{\mean}[1]{\overline{#1}}
\renewcommand{\div}{\, \mathrm{div}\,}
\renewcommand{\vec}{\bm}
\newtheorem{thm}{Theorem}[section]
\newtheorem{lemma}[thm]{Lemma}
\newtheorem{remark}{Remark}[section]
\newtheorem{defn}{Definition}[section]
\newtheorem{assump}{Assumption}[section]
\numberwithin{equation}{section}
\begin{document}

\title{On a Cahn--Hilliard--Darcy system for tumour growth with solution dependent source terms}

\author{Harald Garcke \footnotemark[1] \and Kei Fong Lam \footnotemark[1]}

\date{ }

\maketitle

\renewcommand{\thefootnote}{\fnsymbol{footnote}}
\footnotetext[1]{Fakult\"at f\"ur Mathematik, Universit\"at Regensburg, 93040 Regensburg, Germany
({\tt \{Harald.Garcke, Kei-Fong.Lam\}@mathematik.uni-regensburg.de}).}

\begin{abstract}
We study the existence of weak solutions to a mixture model for tumour growth that consists of a Cahn--Hilliard--Darcy system coupled with an elliptic reaction-diffusion equation.    The Darcy law gives rise to an elliptic equation for the pressure that is coupled to the convective Cahn--Hilliard equation through convective and source terms.    Both Dirichlet and Robin boundary conditions are considered for the pressure variable, which allows for the source terms to be dependent on the solution variables.
\end{abstract}

\noindent \textbf{Key words. } Cahn--Hilliard--Darcy system; phase field model; reaction-diffusion equation; tumour growth; chemotaxis; weak solutions; elliptic-parabolic system. \\

\noindent \textbf{AMS subject classification. } 35D30, 35Q35, 35Q92, 35K57, 76S05, 92C17, 92B05.

\section{Introduction}
At the fundamental level, cancer involves the unregulated growth of tissue inside the human body, which are caused by many biological and chemical mechanisms that take place at multiple spatial and temporal scales.  In order to understand how these multiscale mechanisms are driving the progression of the cancer cells, whose dynamics may be too complex to be approached by experimental techniques, mathematical modelling can be used to provide a tractable description of the dynamics that isolate the key mechanisms  and guide specific experiments.  

We focus on the subclass of models for tumour growth known as diffuse interface models.  These are continuum models that capture the macroscopic dynamics of the morphological changes of the tumour.  For the simplest situation where there are only tumour cells and host cells in the presence of a nutrient, the model equations consists of a Cahn--Hilliard equation coupled to a reaction-diffusion equation for the nutrient.  By treating the tumour and host cells as inertia-less fluids, a Darcy system can be appended to the Cahn--Hilliard equation, leading to a Cahn--Hilliard--Darcy system.  For details regarding the diffuse interface models for tumour growth we refer the reader to \cite{Chen,CLLW,book:Cristini,GLSS,Hawkins,OHP} and the references therein.

Our interest lies in providing analytical results for these models, namely in establishing the existence of weak solutions to the model equations.  Below, we introduce the Cahn--Hilliard--Darcy model to be studied:  Let $\Omega \subset \R^{d}$, $d = 2,3$, be a bounded domain with boundary $\Gamma$, and denote, for $T > 0$, $Q := \Omega \times (0,T)$ and $\Sigma := \Gamma \times (0,T)$.  We study the following elliptic-parabolic system:
\begin{subequations}\label{CHDN}
\begin{alignat}{3}
\div \vec{v} & = \Gamma_{\vec{v}}(\varphi, \sigma) && \text{ in } Q, \label{CHDN:div}   \\
\pd_{t} \varphi + \div (\varphi \vec{v}) & = \div (m(\varphi) \nabla \mu) + \Gamma_{\varphi}(\varphi, \sigma) && \text{ in } Q, \label{CHDN:varphi} \\
\mu & = A \Psi'(\varphi) - B \Laplace \varphi - \chi \sigma && \text{ in } Q, \label{CHDN:mu} \\
0 & = \Laplace \sigma - h(\varphi) \sigma && \text{ in } Q, \label{CHDN:sigma}  \\
\pdnu \varphi  & = 0, \quad \sigma = 1 && \text{ on } \Sigma, \label{CHDN:bdy} \\
\varphi(0) & = \varphi_{0} && \text{ in } \Omega, \label{CHDN:initial}
\end{alignat}
\end{subequations}
where $\pdnu f := \nabla f \cdot \vec{n}$ is the normal derivative of $f$ on the boundary $\Gamma$, with unit normal $\vec{n}$, and in this work, we focus on the following variants of Darcy's law and the boundary conditions
\begin{subequations}\label{DarcySystems}
\begin{alignat}{5}
\vec{v} &= -K (\nabla q + \varphi \nabla (\mu + \chi \sigma)) && \text{ in } Q, \quad  q = 0,&& \quad m(\varphi)\pdnu \mu = \varphi \vec{v} \cdot \vec{n} && \text{ on } \Sigma, \label{Dirichlet} \\
\vec{v} &= - K (\nabla p - (\mu + \chi \sigma) \nabla \varphi) && \text{ in } Q, \quad \mu = 0, &&\quad K \pdnu p = a(g-p) && \text{ on } \Sigma, \label{Robin} \\
\vec{v} & = -K (\nabla p - (\mu + \chi \sigma)\nabla \varphi) && \text{ in } Q, \quad \pdnu \mu = 0,&& \quad K \pdnu p = a(g-p) && \text{ on } \Sigma, \label{Neumann} 
\end{alignat}
\end{subequations}
for some positive constant $a$ and prescribed function $g$. In \eqref{CHDN}, $\vec{v}$ denotes the volume-averaged velocity of the cell mixture, $\sigma$ denotes the concentration of the nutrient, $\varphi$ denotes the difference in volume fractions, with $\{\varphi = 1\}$ representing unmixed tumour tissue, and $\{\varphi = -1\}$ representing the host tissue, and $\mu$ denotes the chemical potential for $\varphi$.

The positive constant $K$ is the permeability of the mixture, $m(\varphi)$ is a positive mobility for $\varphi$.  The constant parameter $\chi \geq 0$ regulates the chemotaxis effect (see \cite{GLSS} for more details), $\Psi'(\cdot)$ is the derivative of a potential function $\Psi(\cdot)$ that has two equal minima at $\pm 1$, $A$ and $B$ denote two positive constants related to the thickness of the diffuse interface and the surface tension, $h(\varphi)$ is an interpolation function that satisfies $h(-1) = 0$ and $h(1) = 1$.  

In \eqref{DarcySystems}, both $p$ and $q$ denote the pressure.  The Darcy law in \eqref{Dirichlet} with pressure $q$ can be obtained from the Darcy law in \eqref{Robin} and \eqref{Neumann} with pressure $p$ by setting $q = p - (\mu + \chi \sigma) \varphi$.  The source terms $\Gamma_{\vec{v}}$ and $\Gamma_{\varphi}$ model, for instance, the growth of the tumour and its effect on the velocity field.  We refer to \cite[\S 2.5]{GLSS} for a discussion regarding the choices for the source terms $\Gamma_{\varphi}, \Gamma_{\vec{v}}$.

We now compare the model \eqref{CHDN} with other models studied in the literature.
\begin{enumerate}
\item In the absence of velocity, i.e., setting $\vec{v} = 0$ in \eqref{CHDN:varphi} and neglecting \eqref{CHDN:div}, we obtain a elliptic-parabolic system that couples a Cahn--Hilliard equation with source term and an elliptic equation for the nutrient.  A similar system has been studied by the authors in \cite{GLDirichlet} with Dirichlet boundary conditions for $\varphi, \mu, \sigma$.  For systems where \eqref{CHDN:sigma} has an additional $\pd_{t}\sigma$ on the left-hand side, the well-posedness of solutions have been studied in \cite{Colli,Frigeri,GLNeumann,GLR} for particular choices of the source term $\Gamma_{\varphi}$.  We also mention the work of \cite{Dai} for the analysis of a system of equations similar to \eqref{CHDN} with $\chi = 0$.

\item In the case $\sigma = 0$, \eqref{CHDN} with the Darcy law \eqref{Robin} reduces to a Cahn--Hilliard--Darcy system, and well-posedness results have been established in \cite{LTZ} for $\Gamma_{\vec{v}} = \Gamma_{\varphi} = 0$ and $\pdnu p = \pdnu \mu = 0$ on $\Sigma$, and in \cite{JWZ} for prescribed source terms $\Gamma_{\vec{v}} = \Gamma_{\varphi} \neq 0$ and $\pdnu p = \pdnu \mu = 0$ on $\Sigma$.  In \cite{Bosia} a related system, known as the Cahn--Hilliard--Brinkman system, is studied, which features an additional term $-\nu \Laplace \vec{v}$ on the left-hand side of the Darcy law \eqref{Robin}, but with $\Gamma_{\vec{v}} = \Gamma_{\varphi} = 0$.  Analogously, \eqref{CHDN} without $\sigma$ and the Darcy law \eqref{Dirichlet} with boundary conditions $\pdnu p = \pdnu \mu = \pdnu \varphi = 0$ on $\Sigma$ has been studied in \cite{FengWise}.  For strong solutions to the Cahn--Hilliard--Darcy system on the $d$-dimensional torus, $d = 2,3$, we refer the reader to \cite{WangWu,WangZhang}.

\item In \cite{GLCHD}, the authors established the global existence of weak solutions to \eqref{CHDN} with the Darcy law \eqref{Robin} that features the following convection-reaction-diffusion equation for $\sigma$:
\begin{align*}
\pd_{t} \sigma + \div (\sigma \vec{v}) = \Laplace \sigma - \chi \Laplace \varphi - S,
\end{align*}
with a prescribed source term $\Gamma_{\vec{v}}$ and source terms $\Gamma_{\varphi}, S$ that depend on $\varphi, \sigma$ and $\mu$ with at most linear growth, along with the boundary conditions $\pdnu \mu = \pdnu \varphi = \pdnu p = 0$ and a Robin boundary condition for $\sigma$.  
\end{enumerate}

For the analyses performed on Cahn--Hilliard--Darcy systems in the literature, many have considered Neumann boundary conditions.  However, a feature of the Neumann conditions for $p$ and $\varphi$ is that 
\begin{align*}
\int_{\Omega} \Gamma_{\vec{v}} \dx = \int_{\Omega} \div \vec{v} \dx = \int_{\Gamma} \vec{v} \cdot \vec{n} \dHaus = \int_{\Gamma} -K \pdnu p + K(\mu + \chi \sigma) \pdnu \varphi \dHaus = 0,
\end{align*}
that is, the source term $\Gamma_{\vec{v}}$ necessarily has zero mean.  For source terms $\Gamma_{\vec{v}}$ that depend on $\varphi$ and $\sigma$, this property may not be satisfied in general.  To allow for source terms that need not have zero mean, one method is to prescribe alternate boundary conditions for the pressure, see for example \cite[\S 2.2.9]{book:ChenHuanMa} and \cite[\S 2.4.4]{GLSS}.  

In this work, we consider analysing the model with a Dirichlet boundary condition and also a Robin boundary condition for the pressure.  Then, the source term $\Gamma_{\vec{v}}$ does not need to fulfil the zero mean condition.  However, it turns out that in the derivation of a priori estimates for the model, we encounter the following:
\begin{itemize}
\item For the natural boundary condition $\pdnu \mu = 0$ and the Robin boundary condition $K \pdnu p = a(g-p)$ on $\Sigma$, we have to restrict our analysis to potentials $\Psi$ that have quadratic growth (Theorem \ref{thm:Neumann}).
\item To consider potentials with polynomial growth of order larger than two, we need to prescribe the boundary conditions \eqref{Dirichlet} and \eqref{Robin} for the chemical potential $\mu$ (Theorems \ref{thm:Dirichlet} and \ref{thm:Robin}).
\end{itemize}
Let us briefly motivate the choices in \eqref{Dirichlet} and \eqref{Robin}.  Due to the quasi-static nature of the nutrient equation \eqref{CHDN:sigma}, we do not obtain a natural energy identity for the system \eqref{CHDN} in contrast to the models studied in \cite{GLCHD,GLNeumann,GLSS}.  For simplicity, let $m(\varphi) = 1$, $K = 1$ and consider testing \eqref{CHDN:varphi} with $\mu + \chi \sigma$, \eqref{CHDN:mu} with $\pd_{t}\varphi$, the Darcy law \eqref{Robin} with $\vec{v}$.  Integrating by parts and upon adding leads to
\begin{equation}\label{Intro:Apriori:Robin}
\begin{aligned}
& \frac{\dd}{\dt} \int_{\Omega} A \Psi(\varphi ) + \frac{B}{2} \abs{\nabla \varphi}^{2} \dx + \int_{\Omega} \abs{\nabla \mu}^{2} + \abs{\vec{v}}^{2} \dx \\
& \quad = \int_{\Omega} - \chi \nabla \mu \cdot \nabla \sigma + \Gamma_{\vec{v}}(p - \varphi (\mu + \chi \sigma)) + \Gamma_{\varphi}(\mu + \chi \sigma) \dx \\
& \qquad + \int_{\Gamma} \pdnu \mu (\mu + \chi \sigma) - p \vec{v} \cdot \vec{n} \dHaus.
\end{aligned}
\end{equation}
If we prescribe the boundary conditions $\pdnu \mu = 0$ and $-\vec{v} \cdot \vec{n} = \pdnu p = a(g-p)$, i.e., the boundary conditions in \eqref{Neumann}, then the boundary term in \eqref{Intro:Apriori:Robin} poses no difficulties.  The main difficulty in obtaining a priori estimates from \eqref{Intro:Apriori:Robin} is to control the source terms $\Gamma_{\vec{v}} \mu \varphi$ and $\Gamma_{\varphi} \mu$ with the left-hand side of \eqref{Intro:Apriori:Robin}.  In the absence of any previous a priori estimates, to control terms involving $\mu$ by the term $\norm{\nabla \mu}_{L^{2}(\Omega)}^{2}$ on the left-hand side via the Poincar\'{e} inequality, an estimate of the square of the mean of $\mu$ is needed.  As observed in \cite{GLNeumann}, this leads to a restriction to quadratic growth assumptions for the potential $\Psi$.  

Furthermore, new difficulties arises in estimating the source term $\Gamma_{\vec{v}} p$ if we do not prescribe a Neumann boundary condition for $p$.  The methodology used in \cite{GLCHD,JWZ} to obtain an estimate for $\norm{p}_{L^{2}(\Omega)}$ relies on the assumption that $\Gamma_{\vec{v}}$ is prescribed and has zero mean, and $\pdnu p = 0$ on $\Sigma$.  The arguments in \cite{GLCHD,JWZ} seem not to be applicable for our present setting (see Remark \ref{Remark:WhyDirichlet} below), where $\Gamma_{\vec{v}}$ is dependent on $\varphi$ and $\sigma$, and a Robin boundary condition is prescribed for $p$.  This motivates the choice of a Dirichlet condition for $\mu$ to handle the source term $\Gamma_{\vec{v}} \varphi \mu$ and $\Gamma_{\varphi} \mu$, and as we will see later in Section \ref{sec:Robin} (specifically \eqref{Robin:Pressure:L2}), the Dirichlet boundary condition for $\mu$ is needed to obtain an $L^{2}$-estimate for $p$.

Alternatively, we may consider the discussion in \cite[\S 8]{GLCHD} regarding reformulations of the Darcy law.  Choosing $q = p - \varphi (\mu + \chi \sigma)$ leads to the Darcy law variant in \eqref{Dirichlet}.  A similar testing procedure leads to
\begin{equation}\label{Intro:Apriori:Dirichlet}
\begin{aligned}
& \frac{\dd}{\dt} \int_{\Omega} A \Psi(\varphi ) + \frac{B}{2} \abs{\nabla \varphi}^{2} \dx + \int_{\Omega} \abs{\nabla \mu}^{2} + \abs{\vec{v}}^{2} \dx \\
& \quad = \int_{\Omega} - \chi \nabla \mu \cdot \nabla \sigma + \Gamma_{\vec{v}}q + \Gamma_{\varphi}(\mu + \chi \sigma) \dx \\
& \qquad + \int_{\Gamma} (\pdnu \mu - \varphi \vec{v} \cdot \vec{n}) (\mu + \chi \sigma) - q \vec{v} \cdot \vec{n} \dHaus.
\end{aligned}
\end{equation}
Here we observed that the source term involving $\Gamma_{\vec{v}}$ simplifies to just $\Gamma_{\vec{v}} q$, and in exchange, we see the appearance of $(q + \varphi \mu + \chi \varphi \sigma) \vec{v} \cdot \vec{n}$ appearing in the boundary term.  Comparing to the previous set-up with \eqref{Robin}, we have shifted the problematic terms to the boundary integral.  Choosing $\vec{v} \cdot \vec{n} = 0$ on $\Sigma$ is not desirable, as equation \eqref{CHDN:div} would the imply that $\Gamma_{\vec{v}}(\varphi, \sigma)$ must have zero mean.  We may instead consider the boundary conditions
\begin{align*}
\pdnu \mu = 0, \quad \vec{v} \cdot \vec{n} = -\pdnu q  -  \chi \varphi \pdnu \sigma = a(q + \varphi (\mu + \chi \sigma)) \text{ on } \Sigma,
\end{align*}
then the boundary term in \eqref{Intro:Apriori:Dirichlet} poses no additional difficulties in obtaining a priori estimate.  In exchange, obtaining an estimate for $\norm{q}_{L^{2}(\Omega)}$ to deal with the source term $\Gamma_{\vec{v}}q$ becomes more involved, as the variational formulation for the pressure system now reads as 
\begin{align*}
\int_{\Omega} \nabla q \cdot \nabla \zeta \dx + \int_{\Gamma} a q \zeta \dHaus = \int_{\Omega} \Gamma_{\vec{v}} \zeta  - \varphi \nabla (\mu + \chi \sigma) \cdot \nabla \zeta \dx - \int_{\Gamma} a \varphi (\mu + \chi \sigma) \zeta \dHaus
\end{align*}
for a test function $\zeta$.  Estimates for $q$ will now involve an estimate for $\norm{\varphi \mu}_{L^{2}(\Gamma)}$, and this is more difficult to control than $\norm{\varphi \mu}_{L^{2}(\Omega)}$.  This motivates the choice of a Dirichlet condition for $q$ and the boundary condition $\pdnu \mu = \varphi \vec{v} \cdot \vec{n}$ to eliminate the boundary term in \eqref{Intro:Apriori:Dirichlet}.

This paper is organized as follows.  In Section \ref{sec:main} we state the main assumptions and the main results.  In Section \ref{sec:Dirichlet:Pressure} we outline the existence proof by first studying a parabolic-regularized variant of \eqref{CHDN}-\eqref{Dirichlet} where we add $\theta \pd_{t} \sigma$ to the left-hand side of \eqref{CHDN:sigma} for $\theta \in (0,1]$ and replace $\sigma$ with $\mathcal{T}(\sigma)$ in $\Gamma_{\vec{v}}$ and $\Gamma_{\varphi}$, where $\mathcal{T}$ is a cut-off operator.  The a priori estimates necessary for a Galerkin approximation to the parabolic-regularized problem is then derived, with which the weak existence for the original problem can be attained by passing to the limit $\theta \to 0$.  The analogous a priori estimates for the Robin boundary conditions \eqref{Robin} and \eqref{Neumann} are specified in Sections \ref{sec:Robin} and \ref{sec:quadratic}, respectively.

\smallskip

\textbf{Notation.}
For convenience, we will often use the notation $L^{p} := L^{p}(\Omega)$ and $W^{k,p} := W^{k,p}(\Omega)$ for any $p \in [1,\infty]$, $k > 0$ to denote the standard Lebesgue spaces and Sobolev spaces equipped with the norms $\norm{\cdot}_{L^{p}}$ and $\norm{\cdot}_{W^{k,p}}$.  In the case $p = 2$ we use $H^{k} := W^{k,2}$ and the norm $\norm{\cdot}_{H^{k}}$.  Due to the Dirichlet boundary condition for $\sigma$ and $\mu$, we denote the space $H^{1}_{0}$ as the completion of $C^{\infty}_{c}(\Omega)$ with respect to the $H^{1}$ norm.  We will use the isometric isomorphism $L^{p}(Q) \cong L^{p}(0,T;L^{p})$ and $L^{p}(\Sigma) \cong L^{p}(0,T;L^{p}(\Gamma))$ for any $p \in [1,\infty)$.  Moreover, the dual space of a Banach space $X$ will be denoted by $X^{*}$, and the duality pairing between $X$ and $X^{*}$ is denoted by $\inner{\cdot}{\cdot}_{X}$.  We denote the dual space to $H^{1}_{0}$ as $H^{-1}$.  For $d = 2$ or $3$, let $\dHaus$ denote integration with respect to the $(d-1)$ dimensional Hausdorff measure on $\Gamma$, and we denote $\R^{d}$-valued functions in boldface.  For convenience, we will often use the notation
\begin{align*}
\int_{Q} f := \int_{0}^{T} \int_{\Omega} f \dx \dt , \quad
\int_{\Omega_{t}} f := \int_{0}^{t} \int_{\Omega} f \dx \ds, \quad \int_{\Gamma_{t}} f := \int_{0}^{t} \int_{\Gamma} f \dHaus \ds
\end{align*}
for any $f \in L^{1}(Q)$ and for any $t \in (0,T]$.

\smallskip

\textbf{Useful preliminaries.}
For convenience, we recall the \emph{Poincar\'{e} inequality}: There exist a positive constant $C_{p}$  depending only on $\Omega$ such that
\begin{align}\label{Poincare}
\bignorm{f - \overline{f}}_{L^{r}} & \leq C_{p} \norm{\nabla f}_{L^{r}} \text{ for all } f \in W^{1,r}, 1 \leq r \leq \infty,
\end{align}
where $\mean{f} := \frac{1}{\abs{\Omega}} \int_{\Omega} f \dx$ denotes the mean of $f$.  Furthermore, we have
\begin{alignat}{3}
\norm{f}_{L^{2}} & \leq C_{p} \left ( \norm{\nabla f}_{L^{2}} + \norm{f}_{L^{2}(\Gamma)} \right ) && \text{ for } f \in H^{1}, \label{Poincare:Robin} \\
\norm{f}_{L^{2}} & \leq C_{p} \norm{\nabla f}_{L^{2}} && \text{ for } f \in H^{1}_{0}\label{Poincare:H10} .
\end{alignat}
The \emph{Gagliardo--Nirenberg interpolation inequality} in dimension $d$ (see \cite[Theorem 2.1]{book:DiBenedetto} and \cite[Theorem 5.8]{book:AdamsFournier}):  Let $\Omega$ be a bounded domain with Lipschitz boundary, and $f \in W^{m,r} \cap L^{q}$, $1 \leq q,r \leq \infty$.  For any integer $j$, $0 \leq j < m$, suppose there is $\alpha \in \R$ such that
\begin{align*}
\frac{1}{p} = \frac{j}{d} + \left ( \frac{1}{r} - \frac{m}{d} \right ) \alpha + \frac{1-\alpha}{q}, \quad \frac{j}{m} \leq \alpha \leq 1.
\end{align*}
If $r \in (1,\infty)$ and $m-j - \frac{d}{r}$ is a non-negative integer, we in addition assume $\alpha \neq 1$.  Under these assumptions, there exists a positive constant $C$ depending only on $\Omega$, $m$, $j$, $q$, $r$, and $\alpha$ such that
\begin{align}
\label{GagNirenIneq}
\norm{D^{j} f}_{L^{p}} \leq C \norm{f}_{W^{m,r}}^{\alpha} \norm{f}_{L^{q}}^{1-\alpha} .
\end{align}
For $f \in L^{2}$, $g \in L^{2}(\Gamma)$, and $\beta > 0$, let $u \in H^{1}$, $w \in H^{1}_{0}$ be the unique solutions to the elliptic problems
\begin{equation*}
\begin{alignedat}{3}
- \Laplace w = f & \text{ in } \Omega, \quad w = 0 &&\text{ on } \Gamma, \\
- \Laplace u = f &\text{ in } \Omega, \quad  \pdnu u + \beta u = g &&\text{ on } \Gamma.
\end{alignedat}
\end{equation*}
We use the notation $u = (-\Laplace_{R})^{-1}(f,\beta, g)$ and $w = (-\Laplace_{D})^{-1}(f)$.  Furthermore, if in addition $g \in H^{\frac{1}{2}}(\Gamma)$ and $\Gamma$ is a $C^{2}$-boundary, then by elliptic regularity theory \cite[Thm. 2.4.2.6]{Grisvard} and \cite[Thm. 2.4.2.5]{Grisvard}, it holds that $w \in H^{2} \cap H^{1}_{0}$ and $u \in H^{2}$ with
\begin{align*}
\norm{w}_{H^{2}} \leq C \norm{f}_{L^{2}}, \quad \norm{u}_{H^{2}} \leq C \left ( \norm{f}_{L^{2}} + \norm{g}_{H^{\frac{1}{2}}(\Gamma)} \right ).
\end{align*}

\section{Assumptions and main results}\label{sec:main}
\begin{assump}\label{assump:Main}
\
\begin{enumerate}[label=$(\mathrm{A \arabic*})$, ref = $\mathrm{A \arabic*}$]
\item \label{assump:Initial} $\Omega \subset \R^{d}$, $d = 2,3$, is a bounded domain with $C^{3}$-boundary $\Gamma$.  The positive constants $a, T, A, B, \chi, K$ are fixed.  The function $g \in L^{2}(\Sigma)$ and the initial condition $\varphi_{0} \in H^{1}$ are prescribed.
\item \label{assump:m:h} The mobility $m \in C^{0}(\R)$ satisfies $0 < m_{0} \leq m(s) \leq m_{1}$ for all $s \in \R$.  The function $h \in C^{0}(\R)$ is non-negative and is bounded above by 1.
\item \label{assump:Potential} The potential $\Psi \in C^{2}(\R)$ is non-negative and, for $ r \in [0,2]$ and for all $s \in \R$, there exist positive constants $C_{1}$, $C_{2}$, $C_{3}$ and $C_{4}$ such that
\begin{align*}
\Psi(s) \geq C_{1} \abs{s}^{2} - C_{2}, \quad \abs{\Psi''(s)} \leq C_{3} \left ( 1 + \abs{s}^{r} \right ), \quad \abs{\Psi'(s)} \leq C_{4} \left ( 1 + \Psi(s) \right ).
\end{align*}
\item \label{assump:Source} The source terms $\Gamma_{\vec{v}}$ and $\Gamma_{\vec{\varphi}}$ are of the form
\begin{align*}
\Gamma_{\vec{v}}(\varphi, \sigma) = b_{\vec{v}}(\varphi) \sigma + f_{\vec{v}}(\varphi), \quad \Gamma_{\varphi}(\varphi, \sigma) = b_{\varphi}(\varphi) \sigma + f_{\varphi}(\varphi),
\end{align*}
where $b_{\vec{v}}, b_{\varphi}, f_{\vec{v}}, f_{\varphi}$ are bounded  and continuous functions.
\end{enumerate}
\end{assump}

We first give the results to the problem \eqref{CHDN}, \eqref{Dirichlet}.

\begin{defn}\label{defn:Weaksoln:Dirichlet}
We call a quintuple $(\varphi, \mu, \sigma, \vec{v}, q)$ a weak solution to \eqref{CHDN}, \eqref{Dirichlet} if
\begin{align*}
\varphi & \in L^{\infty}(0,T;H^{1}) \cap L^{2}(0,T;H^{3}) \cap W^{1,\frac{8}{5}}(0,T;(H^{1})^{*}), \quad \vec{v} \in L^{2}(Q), \\
\sigma & \in (1 + L^{2}(0,T;H^{1}_{0})), \quad \mu \in L^{2}(0,T;H^{1}), \quad q  \in L^{\frac{8}{5}}(0,T;H^{1}_{0}),
\end{align*}
and satisfies $\varphi(0) = \varphi_{0}$, $0 \leq \sigma \leq 1$ a.e. in $Q$, and
\begin{subequations}\label{Weakform:Dirichlet}
\begin{alignat}{3}
0 & = \inner{\pd_{t}\varphi}{\zeta}_{H^{1}} + \int_{\Omega} m(\varphi) \nabla \mu \cdot \nabla \zeta - \varphi \vec{v} \cdot \nabla \zeta - \Gamma_{\varphi}(\varphi, \sigma) \zeta \dx, \label{Weak:D:varphi} \\
0 & = \int_{\Omega} (\mu + \chi \sigma) \zeta - A \Psi'(\varphi) \zeta - B \nabla \varphi \cdot \nabla \zeta \dx, \label{Weak:D:mu} \\
0 & = \int_{\Omega} \nabla \sigma \cdot \nabla \xi + h(\varphi) \sigma \xi \dx, \label{Weak:D:sigma} \\
0 & = \int_{\Omega} K \nabla q \cdot \nabla \xi - \Gamma_{\vec{v}}(\varphi, \sigma) \xi + K \varphi \nabla (\mu + \chi \sigma) \cdot \nabla \xi \dx, \label{Weak:D:Darcy} \\
0 & = \int_{\Omega} \vec{v} \cdot \vec{y} + K \nabla q \cdot \vec{y} + K \varphi \nabla (\mu + \chi \sigma) \cdot \vec{y} \dx, \label{Weak:D:velo} 
\end{alignat}
\end{subequations}
for a.e. $t \in (0,T)$ and all $\zeta \in H^{1}$, $\xi \in H^{1}_{0}$, $\vec{y} \in L^{2}$.
\end{defn}

\begin{thm}\label{thm:Dirichlet}
Under Assumption \ref{assump:Main}, there exists a weak solution to \eqref{CHDN}, \eqref{Dirichlet} in the sense of Definition \ref{defn:Weaksoln:Dirichlet}.
\end{thm}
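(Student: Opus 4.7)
The plan is to follow the parabolic regularization strategy sketched in the outline of Section \ref{sec:Dirichlet:Pressure}. First, I would regularize the quasi-static nutrient equation by replacing \eqref{CHDN:sigma} with $\theta \pd_{t} \sigma = \Laplace \sigma - h(\varphi) \sigma$ for $\theta \in (0,1]$ (supplemented with a compatible initial datum for $\sigma - 1$), and replace $\sigma$ in the source terms by $\mathcal{T}(\sigma) := \max(\min(\sigma,1),0)$. This has two effects: it renders the system fully parabolic (so that Galerkin ODEs are non-degenerate), and the truncation together with assumption \ref{assump:Source} makes $\Gamma_{\vec{v}}$ and $\Gamma_{\varphi}$ uniformly bounded, which avoids circular a priori arguments. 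Writing $\tilde{\sigma} := \sigma - 1 \in H^{1}_{0}$, I would set up a Galerkin approximation using eigenfunctions of the Neumann Laplacian for $\varphi$ and $\mu$, and eigenfunctions of the Dirichlet Laplacian for $\tilde{\sigma}$ and for $q$; the velocity $\vec{v}$ is then defined algebraically from \eqref{Weak:D:velo}. Local existence of Galerkin solutions follows by Cauchy--Peano.

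The heart of the proof is the derivation of a priori estimates uniform in the Galerkin level and in $\theta$, following \eqref{Intro:Apriori:Dirichlet}. I would test \eqref{Weak:D:varphi} with $\mu + \chi \sigma$, \eqref{Weak:D:mu} with $\pd_{t} \varphi$, \eqref{Weak:D:velo} with $\vec{v}$, and \eqref{Weak:D:sigma} with $\tilde{\sigma}$. The boundary term in \eqref{Intro:Apriori:Dirichlet} vanishes by the choice $\pdnu \mu = \varphi \vec{v} \cdot \vec{n}$ and $q = 0$ on $\Sigma$, leaving $\Gamma_{\vec{v}} q + \Gamma_{\varphi}(\mu + \chi \sigma) - \chi \nabla \mu \cdot \nabla \sigma$ on the right-hand side. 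An independent $L^{2}$-bound on $\sigma$ follows from \eqref{Weak:D:sigma}, the boundary trace $\sigma|_{\Gamma} = 1$, and \eqref{Poincare:Robin}, with the comparison principle yielding $0 \leq \sigma \leq 1$. To control $\mu$, I would estimate $\overline{\mu}$ by taking $\zeta \equiv 1$ in \eqref{Weak:D:mu}, using the growth condition $|\Psi'(s)| \leq C_{4}(1 + \Psi(s))$ from \ref{assump:Potential}, then combine with \eqref{Poincare} to bound $\|\mu\|_{L^{2}}$ by the dissipation and the energy. Crucially, since $q \in H^{1}_{0}$, testing \eqref{Weak:D:Darcy} with $q$ itself together with \eqref{Poincare:H10}, boundedness of $\Gamma_{\vec{v}}$, Hölder's inequality, and Gagliardo--Nirenberg \eqref{GagNirenIneq} applied to $\varphi \nabla \mu$ (using $\varphi \in L^{\infty}(H^{1})$) gives $\|q\|_{H^{1}} \lesssim 1 + \|\nabla \mu\|_{L^{2}}^{\alpha}$ for some $\alpha < 1$, which can then be absorbed into the left-hand side of the energy inequality via Young's inequality. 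Higher regularity $\varphi \in L^{2}(H^{3})$ is recovered from \eqref{Weak:D:mu} viewed as an elliptic problem for $\varphi$ with Neumann data, and $\pd_{t} \varphi \in L^{8/5}((H^{1})^{*})$ follows from \eqref{Weak:D:varphi} together with estimates on $\vec{v}$ and $\Gamma_{\varphi}$.

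With these uniform bounds, I would pass to the Galerkin limit using standard weak/weak-$*$ compactness plus Aubin--Lions for strong convergence $\varphi \to \varphi$ in $C([0,T];L^{2}) \cap L^{2}(0,T;W^{1,r})$ (for some $r > 2$), which suffices to identify the nonlinear terms $\Psi'(\varphi)$, $m(\varphi)\nabla \mu$, $\varphi \vec{v}$, $h(\varphi)\sigma$, and $b_{\#}(\varphi)\mathcal{T}(\sigma)$ in the limit. Sending $\theta \to 0$ is a second passage to the limit of the same type: since the bounds obtained are independent of $\theta$, the term $\theta \pd_{t}\sigma$ vanishes in distribution, yielding \eqref{CHDN:sigma}. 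Finally, the comparison principle $0 \leq \sigma \leq 1$ persists in the limit, so $\mathcal{T}(\sigma) = \sigma$ and the truncation disappears. The main obstacle I anticipate is the $L^{2}$-estimate of $q$: unlike in the Neumann setting of \cite{GLCHD,JWZ} where one exploits zero-mean structure, here everything hinges on using the Dirichlet condition $q = 0$ in combination with Poincaré and a careful Gagliardo--Nirenberg interpolation of $\varphi \nabla \mu$ in order to avoid a term like $\|\varphi\|_{L^{\infty}}$, which is unavailable under \ref{assump:Potential}; balancing the exponents so that the pressure estimate genuinely closes against the dissipation is the delicate accounting step that dictates the $L^{8/5}$ time integrability in the solution class.
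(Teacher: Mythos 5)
Your overall architecture --- parabolic regularization of the nutrient equation, truncation of $\sigma$ in the source terms, a Galerkin scheme, the energy identity obtained by testing with $\mu+\chi\sigma$, $\pd_{t}\varphi$, $\vec{v}$ and $\sigma-1$, the estimate of $\mean{\mu}$ via $\zeta\equiv 1$ in \eqref{Weak:D:mu}, the weak comparison principle for $\sigma$, and the two limit passages --- coincides with the paper's proof. However, the one step you yourself single out as the crux, namely the $L^{2}$-estimate of $q$ needed to absorb $\int_{\Omega}\Gamma_{\vec{v}}\,q$, is carried out incorrectly as described, and the argument does not close.

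Testing \eqref{Weak:D:Darcy} with $q$ itself yields
\begin{align*}
K\norm{\nabla q}_{L^{2}}^{2} \leq \norm{\Gamma_{\vec{v}}}_{L^{2}}\norm{q}_{L^{2}} + K\norm{\varphi\nabla(\mu+\chi\sigma)}_{L^{2}}\norm{\nabla q}_{L^{2}},
\end{align*}
so the best this route can give is $\norm{q}_{H^{1}}\lesssim 1+\norm{\varphi\nabla(\mu+\chi\sigma)}_{L^{2}}$. At the stage of the energy estimate you control only $\varphi$ in $H^{1}\subset L^{6}$ and $\nabla\mu$ in $L^{2}$, and $L^{6}\cdot L^{2}\subset L^{3/2}$, not $L^{2}$; no Gagliardo--Nirenberg interpolation of the product $\varphi\nabla\mu$ can produce $\norm{q}_{H^{1}}\lesssim 1+\norm{\nabla\mu}_{L^{2}}^{\alpha}$ with $\alpha<1$, since the dependence on $\nabla\mu$ is exactly linear and the missing integrability must be paid for by $\varphi$, which would require $\norm{\varphi}_{L^{\infty}}$ and hence, via \eqref{GagNirenIneq}, the $H^{3}$-bound on $\varphi$. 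But that $H^{3}$-bound --- and even $\varphi\in L^{\infty}(0,T;H^{1})$ --- are consequences of the very Gronwall argument you are trying to close, so the reasoning is circular. The paper resolves this with a duality argument: set $f=(-\Laplace_{D})^{-1}(q/K)$ and test \eqref{Weak:D:Darcy} with $f$, so that $\norm{q}_{L^{2}}^{2}=\int_{\Omega}\Gamma_{\vec{v}}f-K\varphi\nabla(\mu+\chi\sigma)\cdot\nabla f\dx$; elliptic regularity gives $\nabla f\in H^{1}\subset L^{6}$ with $\norm{f}_{H^{2}}\leq C\norm{q}_{L^{2}}$, so the product may be measured in $L^{6/5}$ and one obtains $\norm{q}_{L^{2}}\leq C(1+\norm{\varphi}_{H^{1}}\norm{\nabla(\mu+\chi\sigma)}_{L^{2}})$, which is linear in $\norm{\nabla\mu}_{L^{2}}$ and therefore absorbable by Young's inequality together with the Gronwall term $\norm{\Psi(\varphi)}_{L^{1}}+\norm{\nabla\varphi}_{L^{2}}^{2}$. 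The direct testing with $q$ is used in the paper only \emph{after} the energy estimate is closed and $\varphi\in L^{2}(0,T;H^{3})$ is known; it is that a posteriori step, not the energy estimate, which produces the $L^{8/5}(0,T;H^{1}_{0})$ regularity of $q$.
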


For the problem \eqref{CHDN}, \eqref{Robin} we have the following.

\begin{defn}\label{defn:Weaksoln:Robin}
We call a quintuple $(\varphi, \mu, \sigma, \vec{v}, p)$ a weak solution to \eqref{CHDN}, \eqref{Robin} if
\begin{align*}
\varphi & \in L^{\infty}(0,T;H^{1}) \cap L^{2}(0,T;H^{3}) \cap W^{1,\frac{8}{5}}(0,T;H^{-1}), \quad \vec{v} \in L^{2}(Q), \\
\sigma & \in (1 + L^{2}(0,T;H^{1}_{0})), \quad \mu \in L^{2}(0,T;H^{1}_{0}), \quad p  \in L^{\frac{8}{5}}(0,T;H^{1}), \; p \vert_{\Sigma} \in L^{2}(\Sigma),
\end{align*}
and satisfies $\varphi(0) = \varphi_{0}$, $0 \leq \sigma \leq 1$ a.e. in $Q$, \eqref{Weak:D:mu}, \eqref{Weak:D:sigma}, and
\begin{subequations}\label{Weakform:Robin}
\begin{alignat}{3}
0 & = \inner{\pd_{t}\varphi}{\xi}_{H^{1}_{0}} + \int_{\Omega} m(\varphi) \nabla \mu \cdot \nabla \xi - \varphi \vec{v} \cdot \nabla \xi - \Gamma_{\vec{v}}(\varphi, \sigma) \xi \dx, \label{Weak:R:varphi} \\
0 & = \int_{\Omega} K \nabla p \cdot \nabla \zeta - \Gamma_{\vec{v}}(\varphi, \sigma) \zeta - K(\mu + \chi \sigma) \nabla \varphi \cdot \nabla \zeta \dx + \int_{\Gamma} a (p-g) \zeta \dHaus, \label{Weak:R:Darcy} \\
0 & = \int_{\Omega} \vec{v} \cdot \vec{y} + K \nabla p \cdot \vec{y} - K (\mu + \chi \sigma) \nabla \varphi \cdot \vec{y} \dx, \label{Weak:R:velo} 
\end{alignat}
\end{subequations}
for a.e. $t \in (0,T)$ and all $\zeta \in H^{1}$, $\xi \in H^{1}_{0}$, $\vec{y} \in L^{2}$.
\end{defn}

\begin{thm}\label{thm:Robin}
Under Assumption \ref{assump:Main}, there exists a weak solution to \eqref{CHDN}, \eqref{Robin} in the sense of Definition \ref{defn:Weaksoln:Robin}.
\end{thm}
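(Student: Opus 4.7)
The plan is to adapt the scheme for Theorem \ref{thm:Dirichlet} outlined in Section \ref{sec:Dirichlet:Pressure}: regularize the quasi-static nutrient equation by adding $\theta \pd_{t}\sigma$ to \eqref{CHDN:sigma} for $\theta \in (0,1]$, replace $\sigma$ by a bounded truncation $\mathcal{T}(\sigma)$ in $\Gamma_{\vec{v}}$ and $\Gamma_{\varphi}$, and construct Galerkin approximations using Dirichlet Laplacian eigenfunctions for $\mu$ (matching $\mu = 0$ on $\Sigma$), Neumann Laplacian eigenfunctions for $\varphi$ (matching $\pdnu \varphi = 0$), Dirichlet eigenfunctions for $\sigma - 1$, and an $H^{1}$-conforming basis for $p$. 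I derive uniform a priori estimates in $\theta$ and the Galerkin index, then pass to the limit first in the Galerkin index and then in $\theta \to 0$.

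The principal energy estimate is \eqref{Intro:Apriori:Robin}, obtained by testing \eqref{CHDN:varphi} with $\mu + \chi \sigma$, \eqref{CHDN:mu} with $\pd_{t}\varphi$, and the Darcy law \eqref{Robin} with $\vec{v}$. The Robin condition $K \pdnu p = a(g - p)$ produces a coercive $a \norm{p}_{L^{2}(\Gamma)}^{2}$ in exchange for the boundary integral $\int_{\Gamma} p \vec{v} \cdot \vec{n} \dHaus$, while $\mu = 0$ kills the $\mu$-part of $\int_{\Gamma} \pdnu \mu (\mu + \chi \sigma) \dHaus$. The decisive advantage of the Dirichlet condition on $\mu$ is that the Poincar\'{e} inequality \eqref{Poincare:H10} bounds $\norm{\mu}_{L^{2}}$ directly by $\norm{\nabla \mu}_{L^{2}}$ with no mean-value obstruction, so the source contributions $\Gamma_{\vec{v}} \varphi \mu$ and $\Gamma_{\varphi} \mu$ can be absorbed by Young's inequality without restricting $\Psi$ to quadratic growth. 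A separate standard estimate of \eqref{CHDN:sigma} tested by $\sigma - 1$ yields $\sigma \in L^{\infty}(0,T; H^{1})$ with $0 \leq \sigma \leq 1$ (by the maximum principle applied to the regularized problem), which absorbs $\chi \nabla \mu \cdot \nabla \sigma$. Altogether this gives the uniform bounds $\varphi \in L^{\infty}(0,T; H^{1})$, $\mu \in L^{2}(0,T; H^{1}_{0})$, and $\vec{v} \in L^{2}(Q)$.

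The hard step is the $L^{2}$-estimate on $p$, needed to control the source $\Gamma_{\vec{v}} p$. I test \eqref{Weak:R:Darcy} with $p$ itself, producing the coercive $K \norm{\nabla p}_{L^{2}}^{2} + a \norm{p}_{L^{2}(\Gamma)}^{2}$ on the left (whence $\norm{p}_{L^{2}}$ via \eqref{Poincare:Robin}) against the right-hand side $K \int_{\Omega} (\mu + \chi \sigma) \nabla \varphi \cdot \nabla p \dx$ plus source and boundary data. The delicate piece is $\mu \nabla \varphi \cdot \nabla p$: I invoke the Dirichlet condition to apply Gagliardo--Nirenberg \eqref{GagNirenIneq} bounding $\norm{\mu}_{L^{r}}$ by $\norm{\nabla \mu}_{L^{2}}$, combined with the higher regularity $\varphi \in L^{2}(0,T; H^{3})$ deduced from \eqref{CHDN:mu} via elliptic regularity (using $\mu, \Psi'(\varphi), \sigma \in L^{2}(Q)$ and Assumption \ref{assump:Potential}), to absorb the problematic term into the coercive contribution and the already controlled energy. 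Without the Dirichlet hypothesis on $\mu$ one would only have mean-value Poincar\'{e} and this step would collapse; this is the main obstacle the theorem circumvents.

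With the above uniform bounds, the time-derivative estimate $\pd_{t}\varphi \in L^{8/5}(0,T; H^{-1})$ follows from \eqref{Weak:R:varphi} by bounding $\varphi \vec{v} \in L^{8/5}(0,T; L^{4/3})$ via $H^{1} \hookrightarrow L^{6}$ in $d \leq 3$ and interpolation. An Aubin--Lions argument delivers strong convergence of $\varphi$ and $\sigma$ in suitable $L^{p}$ spaces, enabling the passage to the Galerkin limit in the nonlinear terms $\Psi'(\varphi)$, $h(\varphi)\sigma$, $\varphi \vec{v}$, and $\Gamma_{\vec{v}}(\varphi, \mathcal{T}(\sigma))$, $\Gamma_{\varphi}(\varphi, \mathcal{T}(\sigma))$. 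The limit $\theta \to 0$ is then executed using the $\theta$-independent estimates, verifying that the limit $\sigma$ still satisfies $0 \leq \sigma \leq 1$ so that $\mathcal{T}(\sigma) = \sigma$ and the limit quintuple $(\varphi, \mu, \sigma, \vec{v}, p)$ meets all the requirements in Definition \ref{defn:Weaksoln:Robin}.
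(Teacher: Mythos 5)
Your overall scheme (parabolic regularisation of the nutrient equation, truncation of the sources, Galerkin approximation, energy estimate, comparison principle, limits in the Galerkin index and in $\theta$) coincides with the paper's, but your treatment of the pressure estimate is circular and would fail. You propose to control $\norm{p}_{L^{2}}$ by testing \eqref{Weak:R:Darcy} with $p$ itself and then handling $K\int_{\Omega}(\mu+\chi\sigma)\nabla\varphi\cdot\nabla p\dx$ using the regularity $\varphi\in L^{2}(0,T;H^{3})$. But that $H^{3}$ bound is obtained from \eqref{Weak:D:mu} by elliptic regularity only \emph{after} the energy estimate has been closed (it needs $\mu\in L^{2}(0,T;H^{1})$ and $\varphi\in L^{\infty}(0,T;H^{1})$), whereas the $L^{2}$ bound on $p$ is needed \emph{inside} the energy estimate to control the source term $\Gamma_{\vec{v}}p$ before Gronwall can be applied. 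At the energy level one only has $\nabla\varphi\in L^{2}$ and $\mu+\chi\sigma\in L^{6}$, so $(\mu+\chi\sigma)\nabla\varphi$ lies in $L^{3/2}$, not in $L^{2}$, and the term $\norm{(\mu+\chi\sigma)\nabla\varphi}_{L^{2}}\norm{\nabla p}_{L^{2}}$ cannot be absorbed. The paper avoids this by a duality argument: it tests \eqref{Weak:R:Darcy} with $f:=(-\Laplace_{R})^{-1}(p/K,a/K,0)$ and uses the elliptic estimate $\norm{f}_{H^{2}}\leq C\norm{p}_{L^{2}}$, which only requires $\norm{(\mu+\chi\sigma)\nabla\varphi}_{L^{6/5}}\leq\norm{\mu+\chi\sigma}_{L^{6}}\norm{\nabla\varphi}_{L^{3/2}}$ --- a quantity controlled by the energy via the Poincar\'{e} inequality \eqref{Poincare:H10} for $\mu$ and $\sigma-1$ (this is where the Dirichlet condition on $\mu$ enters, see \eqref{Robin:Pressure:L2}). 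Your test with $p$ is used in the paper only afterwards, once $\varphi\in L^{2}(0,T;H^{3})$ is available, to upgrade $p$ to $L^{8/5}(0,T;H^{1})$.

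A second, smaller inaccuracy: in the Robin setting the admissible test functions for the $\varphi$-equation lie in $H^{1}_{0}$, so you cannot test with $\mu+\chi\sigma$ (which equals $\chi$ on $\Gamma$); the paper tests with $\mu+\chi(\sigma-1)\in H^{1}_{0}$. Testing formally with $\mu+\chi\sigma$ leaves an uncontrolled boundary contribution proportional to $\chi\int_{\Gamma}\pdnu\mu\dHaus$, contrary to your claim that the Dirichlet condition on $\mu$ removes the whole boundary term. Likewise, testing the regularized nutrient equation with $\sigma-1$ gives $\nabla\sigma\in L^{2}(Q)$ and a $\theta$-weighted $L^{\infty}(0,T;L^{2})$ bound, not $\sigma\in L^{\infty}(0,T;H^{1})$ uniformly in $\theta$.
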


Analogously for the problem \eqref{CHDN}, \eqref{Neumann} we have the following.

\begin{defn}\label{defn:Weaksoln:Neumann}
We call a quintuple $(\varphi, \mu, \sigma, \vec{v}, p)$ a weak solution to \eqref{CHDN}, \eqref{Neumann} if
\begin{align*}
\varphi & \in L^{\infty}(0,T;H^{1}) \cap L^{2}(0,T;H^{3}) \cap W^{1,\frac{8}{5}}(0,T;(H^{1})^{*}), \quad \vec{v} \in L^{2}(Q), \\
\sigma & \in (1 + L^{2}(0,T;H^{1}_{0})), \quad \mu \in L^{2}(0,T;H^{1}), \quad p  \in L^{\frac{8}{5}}(0,T;H^{1}), \; p \vert_{\Sigma} \in L^{2}(\Sigma),
\end{align*}
and satisfies $\varphi(0) = \varphi_{0}$, $0 \leq \sigma \leq 1$ a.e. in $Q$, \eqref{Weak:D:mu}, \eqref{Weak:D:sigma}, \eqref{Weak:R:Darcy} and \eqref{Weak:R:velo} and 
\begin{align}
0 & = \inner{\pd_{t}\varphi}{\zeta}_{H^{1}} + \int_{\Omega} m(\varphi) \nabla \mu \cdot \nabla \zeta + \nabla \varphi \cdot \vec{v} \zeta + \Gamma_{\vec{v}}(\varphi, \sigma) \varphi \zeta - \Gamma_{\varphi}(\varphi, \sigma) \zeta \dx, \label{Weak:N:varphi}
\end{align}
for a.e. $t \in (0,T)$ and all $\zeta \in H^{1}$, $\xi \in H^{1}_{0}$, $\vec{y} \in L^{2}$.
\end{defn}

\begin{thm}\label{thm:Neumann}
Under Assumption \ref{assump:Main}, with \eqref{assump:Potential} replaced by \begin{align}\label{assump:quadratic}
\Psi(s) \geq C_{1} \abs{s}^{2} - C_{2}, \quad \abs{\Psi''(s)} \leq C_{3} \quad \forall s \in \R,
\end{align}
for some positive constants $C_{1}, C_{2}, C_{3}$ , there exists a weak solution to \eqref{CHDN}, \eqref{Neumann} in the sense of Definition \ref{defn:Weaksoln:Neumann}.
\end{thm}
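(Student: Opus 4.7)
\textbf{Proof plan for Theorem \ref{thm:Neumann}.} The strategy mirrors the scheme described in Section \ref{sec:Dirichlet:Pressure} and adapted in Section \ref{sec:Robin}, combining a parabolic regularisation with a Galerkin approximation. Concretely, I would first replace \eqref{CHDN:sigma} by $\theta\pd_t\sigma = \Laplace\sigma - h(\varphi)\sigma$ for $\theta\in(0,1]$, truncate the $\sigma$-dependence of $\Gamma_{\vec{v}}$ and $\Gamma_\varphi$ by composing with a cut-off operator $\mathcal{T}$ adapted to $[0,1]$, and construct Galerkin approximations $(\varphi_n,\mu_n,\sigma_n,\vec{v}_n,p_n)$ using the eigenfunctions of $-\Laplace$ with $\pdnu\cdot=0$ for $\varphi_n,\mu_n,p_n$ and with $\cdot=0$ on $\Gamma$ for $\sigma_n-1$. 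For the pressure I would eliminate $\vec{v}_n$ via the Darcy law and solve the variational problem
\[
\int_\Omega K\nabla p_n\cdot\nabla\zeta\dx + \int_\Gamma a\, p_n\zeta\dHaus = \int_\Omega \bigl(\Gamma_{\vec v}(\varphi_n,\mathcal{T}(\sigma_n)) + K(\mu_n+\chi\sigma_n)\nabla\varphi_n\cdot\nabla\zeta\bigr)\dx + \int_\Gamma a g\zeta\dHaus
\]
by Lax--Milgram, so that after inserting $\vec{v}_n$ the Galerkin system reduces to an ODE in the $(\varphi_n,\mu_n,\sigma_n)$-coefficients whose solvability follows from Cauchy--Peano.

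The heart of the argument is the $\theta$- and $n$-independent a priori estimate. Testing the Galerkin analogues of \eqref{Weak:N:varphi} with $\mu_n+\chi\sigma_n$, of \eqref{Weak:D:mu} with $\pd_t\varphi_n$, and of the Darcy law with $\vec{v}_n$, and summing, produces the identity \eqref{Intro:Apriori:Robin} together with the boundary contribution $-\int_\Gamma p_n\vec{v}_n\cdot\vec{n}\dHaus = \int_\Gamma a\,p_n(p_n-g)\dHaus$, which yields the dissipative bound
\[
\frac{\dd}{\dt}\!\int_\Omega\!\! A\Psi(\varphi_n)+\tfrac{B}{2}\abs{\nabla\varphi_n}^2\dx + \int_\Omega\!\! \abs{\nabla\mu_n}^2+\abs{\vec{v}_n}^2\dx + a\int_\Gamma p_n^2\dHaus \le \text{RHS}
\]
after moving the $ap_n^2$ term to the left. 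The crucial extra ingredient, and the reason for restricting to \eqref{assump:quadratic}, is control of $\overline{\mu_n}$: testing \eqref{CHDN:mu} against the constant $1/\abs{\Omega}$ gives $\overline{\mu_n} = A\overline{\Psi'(\varphi_n)} - \chi\overline{\sigma_n}$, and since $\abs{\Psi''}\le C_3$ implies $\abs{\Psi'(s)}\le C(1+\abs{s})$, the $L^\infty(0,T;L^2)$-bound on $\varphi_n$ (from the coercivity $\Psi\ge C_1\abs{s}^2-C_2$) yields $\overline{\mu_n}\in L^\infty(0,T)$. Combined with Poincar\'e \eqref{Poincare}, this promotes the $L^2(0,T;L^2)$ bound on $\nabla\mu_n$ to an $L^2(0,T;H^1)$ bound on $\mu_n$, which is precisely what is needed to close the source-term estimates $\int\Gamma_{\vec v}p_n + \int\Gamma_\varphi(\mu_n+\chi\sigma_n)$ (using Assumption \ref{assump:Source} and the boundedness of $\mathcal{T}(\sigma_n)$) and the right-hand side $-\chi\int\nabla\mu_n\cdot\nabla\sigma_n$ via Young's inequality, after exploiting the maximum principle $0\le\sigma_n\le 1$ proven as in \cite{GLCHD}.

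With these estimates in hand one applies Gronwall to obtain uniform bounds for $\varphi_n$ in $L^\infty(0,T;H^1)$, for $\mu_n$ in $L^2(0,T;H^1)$, for $\vec{v}_n$ in $L^2(Q)$, for $\sigma_n$ in $1+L^2(0,T;H^1_0)$, and for $p_n$ in $L^2(0,T;L^2(\Gamma))$. Recovering $\nabla p_n$ from the Darcy identity $K\nabla p_n = -\vec{v}_n + K(\mu_n+\chi\sigma_n)\nabla\varphi_n$ requires estimating $(\mu_n+\chi\sigma_n)\nabla\varphi_n$ in $L^{8/5}(0,T;L^2)$; here I would first upgrade $\varphi_n$ to $L^2(0,T;H^3)$ via elliptic regularity applied to \eqref{CHDN:mu} (using $\mu_n+\chi\sigma_n-A\Psi'(\varphi_n)\in L^2(0,T;H^1)$ thanks to quadratic growth), then interpolate $\nabla\varphi_n$ into $L^{8/3}(0,T;L^\infty)$ in dimension three and pair with $\mu_n\in L^2(0,T;L^6)$. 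This simultaneously yields the claimed $L^{8/5}(0,T;H^1)$ regularity of $p_n$ and, via $\div(\varphi_n\vec{v}_n)=\varphi_n\Gamma_{\vec v}+\vec{v}_n\cdot\nabla\varphi_n$, the $W^{1,8/5}(0,T;(H^1)^*)$ bound on $\varphi_n$.

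The final step is the limit passage $n\to\infty$ followed by $\theta\to 0$. Standard Aubin--Lions compactness gives strong convergence of $\varphi_n$ in $L^2(0,T;H^1)\cap C([0,T];L^2)$, which combined with the continuity and boundedness of $b_{\vec v},b_\varphi,f_{\vec v},f_\varphi,h,m,\Psi'$ handles the nonlinear terms; weak convergence of $\mu_n,\vec{v}_n,p_n,\sigma_n$ handles the linear ones, and the boundary integral passes to the limit thanks to the $L^2(\Sigma)$-bound on $p_n$. Removing the truncation $\mathcal{T}$ is immediate once $0\le\sigma\le 1$ is verified in the limit, and sending $\theta\to 0$ uses the same estimates with $\theta\pd_t\sigma_n$ only entering as a $\theta$-small remainder. \emph{The main obstacle} I anticipate is the pressure estimate: without a Neumann condition on $p$ one cannot play the zero-mean trick of \cite{GLCHD,JWZ}, and controlling $(\mu+\chi\sigma)\nabla\varphi$ in a space sufficient to recover $\nabla p$ forces a delicate interpolation between the $H^3$-regularity of $\varphi$ and the $H^1$-regularity of $\mu$, which is exactly where the quadratic growth of $\Psi$ becomes indispensable (to keep $A\Psi'(\varphi)$ in $L^2(0,T;H^1)$ and thus unlock the higher elliptic regularity of $\varphi$).
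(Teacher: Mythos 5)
Your overall scheme (parabolic regularisation, cut-off in $\sigma$, Galerkin, weak comparison principle for $0\le\sigma\le1$, Gronwall, $\theta\to0$) mirrors the paper's, and you correctly identify that the quadratic-growth assumption \eqref{assump:quadratic} is what makes $\overline{\mu}$ controllable through $\norm{\Psi'(\varphi)}_{L^{1}}$. However, there is a genuine gap in how you propose to close the energy estimate, specifically at the source term $\int_{\Omega}\Gamma_{\vec{v}}\,p$.

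Your energy balance puts $a\norm{p}_{L^{2}(\Gamma)}^{2}$ on the left-hand side, but that alone does not control $\norm{p}_{L^{2}(\Omega)}$, which is what one needs in order to absorb $\int_{\Omega}\Gamma_{\vec{v}}\,p \le C\norm{p}_{L^{2}(\Omega)}$. The Poincar\'{e} inequality \eqref{Poincare:Robin} would additionally require $\norm{\nabla p}_{L^{2}}$, but from the Darcy law this costs $\norm{(\mu+\chi\sigma)\nabla\varphi}_{L^{2}}$, which is \emph{not} controllable with only $\varphi\in H^{1}$; that is precisely why you later need $L^{2}(0,T;H^{3})$-regularity and interpolation for the a posteriori $L^{8/5}(0,T;H^{1})$-bound on $p$, which is unavailable at the stage where Gronwall has to close. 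The paper circumvents this by solving the auxiliary elliptic problem $f=(-\Laplace_{R})^{-1}(p/K,a/K,0)$, testing \eqref{Weak:R:Darcy} with $f$, and invoking $H^{2}$-regularity of $f$ to obtain
\[
\norm{p}_{L^{2}} \le C\left(1 + \norm{g}_{L^{2}(\Gamma)} + \norm{(\mu+\chi\sigma)\nabla\varphi}_{L^{6/5}}\right),
\]
where the crucial gain is the weaker $L^{6/5}$-norm: it factors as $\norm{\mu+\chi\sigma}_{L^{6}}\norm{\nabla\varphi}_{L^{3/2}}$, and $\norm{\nabla\varphi}_{L^{3/2}}\le C\norm{\nabla\varphi}_{L^{2}}$ sits on the left-hand side of the energy inequality, while $\norm{\mu+\chi\sigma}_{L^{6}}$ involves $\abs{\overline{\mu}}$ and $\norm{\nabla\mu}_{L^{2}}$, which is exactly where \eqref{assump:quadratic} enters (via the pointwise-in-time bound $\norm{\Psi'(\varphi)}_{L^{1}}^{2}\le C(1+\norm{\Psi(\varphi)}_{L^{1}})$). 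Without this auxiliary elliptic duality step, the $\Gamma_{\vec{v}}p$ term in your differential inequality cannot be absorbed, and Gronwall does not close. You identify ``the pressure estimate'' as the main obstacle, but the hard part is the pointwise-in-time $L^{2}(\Omega)$-estimate inside the Gronwall loop, not the a posteriori $L^{8/5}(0,T;H^{1})$-regularity you describe.

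A smaller imprecision: you state that $\overline{\mu}\in L^{\infty}(0,T)$ is ``what is needed to close the source-term estimates''. This inverts the logical order: the $L^{\infty}(0,T;L^{2})$-bound on $\varphi$ is an output of Gronwall, not an input. What the estimate actually needs is the pointwise-in-time inequality $\abs{\overline{\mu}(t)}\le C(1+\norm{\Psi(\varphi(t))}_{L^{1}})^{1/2}$, so that the right-hand side of the differential inequality is controlled by the quantities already appearing on the left, making the Gronwall argument self-contained. The paper writes this out explicitly; your formulation obscures whether the circularity is actually resolved.
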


We use the fact that $H^{1} \subset \subset L^{2} \subset (H^{1})^{*}$, $H^{1} \subset \subset L^{2} \subset H^{-1}$, and \cite[\S 8, Cor. 4]{Simon} to deduce that $\varphi \in C^{0}([0,T];L^{2})$ in all cases, and thus $\varphi(0)$ makes sense as a function in $L^{2}$.  This implies that the initial condition $\varphi_{0}$ is attained in all cases.


\section{Dirichlet boundary conditions for the pressure} \label{sec:Dirichlet:Pressure}
We show the existence of weak solutions to \eqref{CHDN}, \eqref{Dirichlet} by means of a Galerkin approximation, and first consider a regularisation of \eqref{CHDN}, \eqref{Dirichlet}, where \eqref{CHDN:sigma} is replaced with 
\begin{align}\label{Parabolic:Nutrient} 
\theta \pd_{t} \sigma - \Laplace \sigma  + h(\varphi) \sigma = 0 \text{ in } Q, \quad
\sigma  = 1 \text{ on } \Sigma, \quad \sigma(0) = \sigma_{0} \text{ in } \Omega
\end{align}
for some $\theta \in (0,1]$, and $\sigma_{0} \in L^{2}(\Omega)$.  Furthermore, we introduce a cut-off operator $\mathcal{T}(s) := \max(0, \min(1,s))$ and replace the source terms with
\begin{align*}
\Gamma_{\vec{v}}(\varphi, \sigma) = b_{\vec{v}}(\varphi) \mathcal{T}(\sigma) + f_{\vec{v}}(\varphi), \quad \Gamma_{\varphi}(\varphi, \sigma) = b_{\varphi}(\varphi) \mathcal{T}(\sigma) + f_{\varphi}(\varphi).
\end{align*}
The procedure is to first use a Galerkin approximation to deduce the existence of a weak solution quintuple $(\varphi^{\theta}, \mu^{\theta}, \sigma^{\theta}, \vec{v}^{\theta}, q^{\theta})$ to the regularized problem, and subsequently employ a weak comparison principle at the continuous level to show $0 \leq \sigma^{\theta} \leq 1$ a.e. in $Q$, so that the cut-off operator $\mathcal{T}$ can then be neglected.   Then, we pass to the limit $\theta \to 0$ to obtain the existence of a weak solution to \eqref{CHDN}, \eqref{Dirichlet}.

Below we will derive the necessary a priori estimates to prove existence of weak solutions to the regularized problem
\begin{subequations}\label{Reg:prob:1}
\begin{alignat}{3}
\div \vec{v} & = b_{\vec{v}}(\varphi) \mathcal{T}(\sigma) + f_{\vec{v}}(\varphi) && \text{ in } Q, \label{Reg:1:Div} \\
\vec{v} & = -K (\nabla q + \varphi \nabla (\mu + \chi \sigma)) && \text{ in } Q, \label{Reg:1:Darcy} \\
\pd_{t} \varphi + \div (\varphi \vec{v}) & = \div (m(\varphi) \nabla \mu) +  b_{\varphi}(\varphi) \mathcal{T}(\sigma) + f_{\varphi}(\varphi) && \text{ in } Q,  \label{Reg:1:varphi} \\
\mu & = A \Psi'(\varphi) - B \Laplace \varphi - \chi \sigma && \text{ in } Q,  \label{Reg:1:chem} \\
\theta \pd_{t} \sigma & = \Laplace \sigma - h(\varphi) \sigma && \text{ in } Q, \label{Reg:1:sigma} \\
\pdnu \varphi & = 0, \quad  m(\varphi) \pdnu\mu = \varphi \vec{v} \cdot \vec{n}, \quad q = 0, \quad \sigma = 1 && \text{ on } \Sigma, \\
\varphi(0) & = \varphi_{0}, \quad \sigma(0) = \sigma_{0} && \text{ in } \Omega,
\end{alignat}
\end{subequations}
with an initial condition $0 \leq \sigma_{0} \leq 1$ a.e. in $\Omega$.

\begin{lemma}\label{lem:Reg:problem:1}
Under Assumption \ref{assump:Main} and $0 \leq \sigma_{0} \leq 1$ a.e. in $\Omega$, for any $\theta \in (0,1]$, there exists a weak solution quintuple $(\varphi^{\theta}, \mu^{\theta}, \sigma^{\theta}, \vec{v}^{\theta}, q^{\theta})$ in the sense of Definition \ref{defn:Weaksoln:Dirichlet} with additionally $\sigma^{\theta} \in H^{1}(0,T;H^{-1})$, $\sigma^{\theta}(0) = \sigma_{0}$ a.e. in $\Omega$, and \eqref{Weak:D:sigma} is replaced by
\begin{align}\label{Reg:weak:D:sigma}
0 & = \inner{\theta \pd_{t} \sigma^{\theta}}{\xi}_{H^{1}_{0}} + \int_{\Omega} \nabla \sigma^{\theta} \cdot \nabla \xi + h(\varphi^{\theta}) \sigma^{\theta} \xi \dx \quad \forall \xi \in H^{1}_{0}.
\end{align}
Furthermore, there exists a positive constant $C$ not depending on $\theta, \varphi^{\theta}, \mu^{\theta}, \sigma^{\theta}, \vec{v}^{\theta}, q^{\theta}$ such that \begin{equation}\label{Reg:problem:1:Est}
\begin{aligned}
& \norm{\Psi(\varphi^{\theta})}_{L^{\infty}(0,T;L^{1})} + \norm{\Psi'(\varphi^{\theta})}_{L^{2}(0,T;H^{1})} + \norm{\varphi^{\theta}}_{L^{\infty}(0,T;H^{1}) \cap L^{2}(0,T;H^{3})} \\
& \quad + \norm{\mu^{\theta}}_{L^{2}(0,T;H^{1})}  + \norm{\vec{v}^{\theta}}_{L^{2}(Q)} + \norm{q^{\theta}}_{L^{\frac{8}{5}}(0,T;H^{1}_{0})} + \norm{\pd_{t}\varphi^{\theta}}_{L^{\frac{8}{5}}(0,T;(H^{1})^{*})} \\
& \quad + \norm{\sigma^{\theta}}_{L^{2}(0,T;H^{1})} + \norm{\theta \pd_{t}\sigma^{\theta}}_{L^{2}(0,T;H^{-1})} \leq C.
\end{aligned}
\end{equation}

\end{lemma}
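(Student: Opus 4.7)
The plan is a Faedo--Galerkin scheme coupled to the a priori estimate motivated by \eqref{Intro:Apriori:Dirichlet}, passage to the limit in the discretization index, and a weak comparison argument to remove the cutoff $\mathcal{T}$.  Fix smooth bases $\{w_i\}_{i\ge 1} \subset H^2$ of $H^1$ (Neumann Laplacian eigenfunctions, which include the constants) and $\{z_i\}_{i\ge 1} \subset H^2 \cap H^1_0$ of $H^1_0$ (Dirichlet Laplacian eigenfunctions); set $W_n := \mathrm{span}\{w_1,\ldots,w_n\}$, $Z_n := \mathrm{span}\{z_1,\ldots,z_n\}$.  For each $n$, seek $\varphi^n, \mu^n : [0,T_n] \to W_n$, $\sigma^n = 1 + \tilde\sigma^n$ with $\tilde\sigma^n : [0,T_n] \to Z_n$, $q^n : [0,T_n] \to Z_n$, and $\vec{v}^n := -K(\nabla q^n + \varphi^n \nabla(\mu^n + \chi \sigma^n))$ satisfying the Galerkin projections of \eqref{Weak:D:varphi}, \eqref{Weak:D:mu}, \eqref{Reg:weak:D:sigma}, \eqref{Weak:D:Darcy}.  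The $\mu^n$- and $q^n$-equations are linear and elliptic on their subspaces, so they uniquely determine $\mu^n$ and $q^n$ as smooth functions of $(\varphi^n, \sigma^n)$; substitution reduces the scheme to a first-order ODE system in the Galerkin coefficients of $\varphi^n$ and $\tilde\sigma^n$.  The cutoff $\mathcal{T}$ gives a continuous, locally bounded right-hand side, so Carath\'{e}odory's theorem yields local existence.

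Mimic \eqref{Intro:Apriori:Dirichlet} at the discrete level by testing the $\varphi^n$-equation with $\mu^n \in W_n$, the $\mu^n$-equation with $\pd_t \varphi^n \in W_n$, and the Darcy law with $\vec{v}^n$; the chemical potential identity absorbs the $\chi \sigma^n$-contribution through $\mu^n + \chi \sigma^n = A \Psi'(\varphi^n) - B \Laplace \varphi^n$, and the boundary conditions $q^n|_\Sigma = 0$ and $m(\varphi^n)\pdnu \mu^n = \varphi^n \vec{v}^n \cdot \vec{n}$ cancel all boundary traces to give
\[
\frac{\dd}{\dt} \int_\Omega A \Psi(\varphi^n) + \tfrac{B}{2}|\nabla \varphi^n|^2 \dx + m_0 \|\nabla \mu^n\|_{L^2}^2 + \tfrac{1}{K}\|\vec{v}^n\|_{L^2}^2 \le \int_\Omega \Gamma^n_{\vec v} q^n + \Gamma^n_\varphi(\mu^n + \chi \sigma^n) - \chi m(\varphi^n) \nabla \mu^n \cdot \nabla \sigma^n \dx.
\]
Testing \eqref{Reg:weak:D:sigma} with $\tilde\sigma^n \in Z_n$ together with $h \ge 0$ yields $\theta^{1/2}\|\tilde\sigma^n\|_{L^\infty(L^2)} + \|\sigma^n\|_{L^2(H^1)} \le C$; testing the discrete $q^n$-problem with $q^n$ and using Poincar\'{e} \eqref{Poincare:H10} gives $\|q^n\|_{H^1_0} \le C(1 + \|\varphi^n\|_{L^\infty}\|\nabla(\mu^n + \chi \sigma^n)\|_{L^2})$.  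Integrating \eqref{Weak:D:mu} over $\Omega$ and invoking $|\Psi'(s)| \le C_4(1 + \Psi(s))$ controls $|\overline{\mu^n}|$ by $1 + \|\Psi(\varphi^n)\|_{L^1}$, so Poincar\'{e}--Wirtinger \eqref{Poincare} bounds $\|\mu^n\|_{L^2}$ by $\|\nabla\mu^n\|_{L^2} + 1 + \|\Psi(\varphi^n)\|_{L^1}$; Young's inequality and Gronwall then absorb all source contributions and produce the bounds on $\Psi(\varphi^n), \varphi^n, \mu^n, \vec{v}^n, \sigma^n$ uniformly in $n$ and $\theta$.

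For the remaining bounds, elliptic regularity applied to $-B \Laplace \varphi^n = \mu^n - A \Psi'(\varphi^n) + \chi \sigma^n$ with $\pdnu \varphi^n = 0$, using $|\Psi''(s)| \le C_3(1+|s|^2)$ and $H^1 \hookrightarrow L^6$ in dimension $3$, yields $\varphi^n \in L^2(H^3)$; the Gagliardo--Nirenberg inequality \eqref{GagNirenIneq} then gives $\|\varphi^n\|_{L^\infty} \le C\|\varphi^n\|_{H^3}^{1/4}\|\varphi^n\|_{H^1}^{3/4}$, hence $\|\varphi^n\|_{L^8(0,T;L^\infty)} \le C$.  Consequently $\|\varphi^n\nabla(\mu^n + \chi \sigma^n)\|_{L^{8/5}(0,T;L^2)} \le C$, the elliptic $q^n$-bound upgrades to $q^n \in L^{8/5}(0,T;H^1_0)$, and comparison in \eqref{Weak:D:varphi} (respectively \eqref{Reg:weak:D:sigma}) delivers $\pd_t\varphi^n \in L^{8/5}(0,T;(H^1)^*)$ (respectively $\theta\pd_t\sigma^n \in L^2(0,T;H^{-1})$).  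Banach--Alaoglu and Aubin--Lions (via $\varphi^n \in L^2(H^3) \cap W^{1,8/5}((H^1)^*)$ for strong convergence in $L^2(Q)$) extract a subsequence converging to $(\varphi^\theta,\mu^\theta,\sigma^\theta,\vec{v}^\theta,q^\theta)$, and continuity of $\Psi', m, h, b_{\vec v}, b_\varphi, f_{\vec v}, f_\varphi$ and of $\mathcal{T}$ identifies the nonlinear limits; testing the limit $\sigma^\theta$-equation with $(\sigma^\theta - 1)^+$ and with $(\sigma^\theta)^-$ (both in $L^2(H^1_0)$) and using $h \ge 0$ gives $0 \le \sigma^\theta \le 1$ a.e.\ in $Q$, so $\mathcal{T}$ reduces to the identity.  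The main obstacle is closing the energy estimate against $\int_\Omega \Gamma_{\vec v}\, q\dx$: the Dirichlet $L^2$-estimate for $q$ involves $\|\varphi\|_{L^\infty}\|\nabla\mu\|_{L^2}$, which is not available from the energy alone, forcing a bootstrap through the $L^2(H^3)$-regularity of $\varphi$ (itself contingent on controlling $\Psi'(\varphi)$ in $H^1$) and the Gagliardo--Nirenberg upgrade to $L^8(L^\infty)$ before Gronwall can be applied.
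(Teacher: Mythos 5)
The proposal has a genuine gap in the derivation of the energy estimate, and you have in fact identified it yourself in your closing sentence but have not resolved it. The issue is the estimate for $\norm{q}_{L^{2}}$ needed to control the source term $\int_{\Omega} \Gamma_{\vec{v}}\, q \dx$ in the Gronwall argument. You obtain $q$ by testing the Darcy equation with $q^{n}$, which yields
\begin{align*}
\norm{q^{n}}_{H^{1}_{0}} \leq C\left(1 + \norm{\varphi^{n}}_{L^{\infty}} \norm{\nabla(\mu^{n}+\chi\sigma^{n})}_{L^{2}}\right),
\end{align*}
but $\norm{\varphi^{n}}_{L^{\infty}}$ is not controlled by the quantities appearing on the left of the energy inequality. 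Your suggested remedy --- bootstrapping through $L^{2}(H^{3})$ regularity and the Gagliardo--Nirenberg upgrade to $L^{8}(L^{\infty})$ ``before Gronwall can be applied'' --- is circular: the $H^{3}$ estimate for $\varphi$ relies on $\Psi'(\varphi) \in L^{2}(0,T;H^{1})$ and $\mu, \sigma \in L^{2}(0,T;H^{1})$, all of which are themselves consequences of the Gronwall step you are trying to close. The correct route, and the one the paper takes, is to forgo the $H^{1}_{0}$ estimate at this stage and instead test the Darcy equation with $f := (-\Laplace_{D})^{-1}(q/K)$, so that $\norm{q}_{L^{2}}^{2} = \int_{\Omega} \Gamma_{\vec{v}} f - K\varphi\nabla(\mu+\chi\sigma)\cdot\nabla f \dx$. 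Applying H\"older with the pair $(6/5,6)$, the Sobolev embedding $H^{1} \subset L^{6}$, and elliptic regularity $\norm{f}_{H^{2}} \leq C\norm{q}_{L^{2}}$ gives
\begin{align*}
\norm{q}_{L^{2}} \leq C\left(1 + \norm{\varphi}_{L^{3}}\norm{\nabla(\mu+\chi\sigma)}_{L^{2}}\right) \leq C\left(1 + \norm{\varphi}_{H^{1}}\norm{\nabla(\mu+\chi\sigma)}_{L^{2}}\right),
\end{align*}
which only involves $\norm{\varphi}_{H^{1}}$ and can be absorbed via Young's inequality into the left-hand side. The direct $q^{n}$-test and the $L^{8}(L^{\infty})$ upgrade are perfectly correct, but belong \emph{after} Gronwall, where they produce the final $L^{8/5}(0,T;H^{1}_{0})$ bound.

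Two smaller remarks. First, decoupling the $\sigma$-estimate (testing the parabolic $\sigma$-equation alone with $\tilde\sigma^{n}$, then using Poincar\'{e} to absorb $\int h(\varphi)\tilde\sigma$) is a valid and arguably cleaner alternative to the paper's approach of carrying a weight $Z$ through a single combined energy identity; both work since the $\sigma$-equation is only coupled through the bounded function $h(\varphi)$. Second, you write ``testing the $\varphi^{n}$-equation with $\mu^{n}$'' but then state the right-hand side $\Gamma_{\varphi}^{n}(\mu^{n}+\chi\sigma^{n})$, which arises only when testing with $\mu^{n}+\chi\sigma^{n}$; the latter test is what cancels the otherwise uncontrolled $\chi\int\sigma\,\pd_{t}\varphi$ from the chemical potential identity, so the test function must include $\chi\sigma$ (the paper likewise glosses over the corresponding Galerkin space compatibility issue, but the stated test function should be $\mu+\chi\sigma$).
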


\begin{proof}
The details regarding the existence of Galerkin solutions via the theory of ODEs can be found in \cite{GLCHD, JWZ}, and so we will omit the details and focus only on the a priori estimates.  In the following, $C$ denotes a positive constant not depending on $(\varphi, \mu, \sigma, \vec{v}, q)$ and $\theta$, and may vary from line to line.

At the Galerkin level, we may replace duality pairings in \eqref{Weak:D:varphi} and \eqref{Reg:weak:D:sigma} with $L^{2}$-inner products.  For convenience let us reuse the variables $\varphi, \mu, \sigma, \vec{v}, q$ as the Galerkin solutions.  Let $Z > 0$ be a constant yet to be specified, then substituting $\xi = Z(\sigma - 1)$ in \eqref{Reg:weak:D:sigma}, $\zeta = \pd_{t}\varphi$ in \eqref{Weak:D:mu}, $\zeta = \mu + \chi \sigma$ in \eqref{Weak:D:varphi}, $\vec{y} = K^{-1} \vec{v}$ in \eqref{Weak:D:velo} and summing leads to
\begin{equation}\label{Apriori:Reg:D:Est}
\begin{aligned}
& \frac{\dd}{\dt} \int_{\Omega} A \Psi(\varphi) + \frac{B}{2} \abs{\nabla \varphi}^{2} + \frac{Z}{2} \theta \abs{\sigma -1}^{2} \dx \\
& \qquad  + \int_{\Omega} m(\varphi) \abs{\nabla \mu}^{2} + \frac{1}{K} \abs{\vec{v}}^{2} + Z \abs{\nabla \sigma}^{2} + Z h(\varphi) \abs{\sigma}^{2} \dx \\
& \quad = \int_{\Omega} - m(\varphi) \chi \nabla \mu \cdot \nabla \sigma + \Gamma_{\varphi} (\mu + \chi \sigma) + \Gamma_{\vec{v}} q + Z h(\varphi) \sigma \dx.
\end{aligned}
\end{equation}
Similarly to \cite{GLDirichlet} we estimate terms on the right-hand side involving $\sigma$ by $C_{1} \norm{\nabla \sigma}_{L^{2}}^{2} + C_{2} Z$ through the use of the Poincar\'{e} inequality, where $C_{1}, C_{2}$ are positive constants such that $C_{1}$ is independent of $Z$.  Thanks to the cutoff operator and the boundedness of $f_{\vec{v}}$ and $f_{\varphi}$, we see that
\begin{equation}\label{Dirichlet:Source:Est1}
\begin{aligned}
& \abs{\int_{\Omega} \Gamma_{\varphi} (\mu + \chi \sigma) + \Gamma_{\vec{v}} q \dx} \\
& \quad \leq C \left ( 1 + \norm{\mu - \mean{\mu}}_{L^{1}} + \abs{\mean{\mu}}_{L^{1}} + \norm{q}_{L^{2}} + \norm{\sigma - 1}_{L^{2}} \right ) \\
& \quad \leq C(1 + \abs{\mean{\mu}} + \norm{q}_{L^{2}}) + \frac{m_{0}}{4} \norm{\nabla \mu}_{L^{2}}^{2} + \norm{\nabla \sigma}_{L^{2}}^{2},
\end{aligned}
\end{equation}
where we have used the Poincar\'{e} inequality \eqref{Poincare} with $r = 1$ and Young's inequality.  From substituting $\zeta = 1$ in \eqref{Weak:D:mu} and using \eqref{assump:Potential}, we find that 
\begin{align}\label{Dirichlet:mean:mu}
\abs{\mean{\mu}} \leq C( 1 + \norm{\sigma - 1}_{L^{2}} + \norm{\Psi'(\varphi)}_{L^{1}}) \leq C \left ( 1 + \norm{\Psi(\varphi)}_{L^{1}} + \norm{\nabla \sigma}_{L^{2}} \right ).
\end{align}
To obtain an estimate of $\norm{q}_{L^{2}}$, we look at the pressure system, whose weak formulation is given by \eqref{Weak:D:Darcy}.  Let $f := (- \Laplace_{D})^{-1}(q/K)$, so that
\begin{align*}
\int_{\Omega} K \nabla f \cdot \nabla \phi \dx = \int_{\Omega} q \phi \dx \text{ for all } \phi \in H^{1}_{0}.
\end{align*}
Substituting $\xi = f$ in \eqref{Weak:D:Darcy} and $\phi = q$ in the above leads to
\begin{align*}
\norm{q}_{L^{2}}^{2} & = \int_{\Omega} K \nabla q \cdot \nabla f \dx = \int_{\Omega} \Gamma_{\vec{v}} f - K \varphi \nabla (\mu + \chi \sigma) \cdot \nabla f \dx \\
& \leq \norm{\Gamma_{\vec{v}}}_{L^{2}} \norm{f}_{L^{2}} + K \norm{\varphi \nabla (\mu + \chi \sigma)}_{L^{\frac{6}{5}}} \norm{\nabla f}_{L^{6}} \\
& \leq C \left ( 1 + \norm{\varphi}_{L^{3}} \norm{\nabla (\mu + \chi \sigma)}_{L^{2}} \right ) \norm{f}_{H^{2}}.
\end{align*}
Using the elliptic regularity estimate $\norm{f}_{H^{2}} \leq C \norm{q}_{L^{2}}$, we find that
\begin{equation}\label{Dirichlet:pressure:Est}
\begin{aligned}
\norm{q}_{L^{2}} & \leq C \left ( 1 + \norm{\varphi}_{H^{1}} \norm{\nabla (\mu + \chi \sigma)}_{L^{2}} \right ) \\
& \leq \frac{m_{0}}{4} \norm{\nabla \mu}_{L^{2}}^{2} + \norm{\nabla \sigma}_{L^{2}}^{2} + C \left ( 1 + \norm{\Psi(\varphi)}_{L^{1}} + \norm{\nabla \varphi}_{L^{2}}^{2} \right ),
\end{aligned}
\end{equation}
where we have used the Sobolev embedding $H^{1} \subset L^{3}$ and  \eqref{assump:Potential}.  Then, substituting the estimates \eqref{Dirichlet:mean:mu}, \eqref{Dirichlet:pressure:Est} into \eqref{Dirichlet:Source:Est1}, we find that the right-hand side of \eqref{Apriori:Reg:D:Est} can be estimated as
\begin{align*}
\abs{\mathrm{RHS}} & \leq \frac{m_{0}}{4} \norm{\nabla \mu}_{L^{2}}^{2} + \frac{\chi^{2} m_{1}^{2}}{m_{0}} \norm{\nabla \sigma}_{L^{2}}^{2} + Z \norm{\sigma - 1}_{L^{1}}  + Z \\
& \qquad + C \left ( 1 + \norm{\sigma-1}_{L^{2}} + \norm{\mu - \mean{\mu}}_{L^{1}} + \abs{\mean{\mu}} + \norm{q}_{L^{2}} \right ) \\
& \leq \frac{3 m_{0}}{4} \norm{\nabla \mu}_{L^{2}}^{2} + \left ( \frac{\chi^{2} m_{1}^{2}}{m_{0}} + 4 \right ) \norm{\nabla \sigma}_{L^{2}}^{2} \\
& \qquad +  C \left (1 + Z^{2} + \norm{\Psi(\varphi)}_{L^{1}} + \norm{\nabla \varphi}_{L^{2}}^{2} \right ).
\end{align*}
Neglecting the non-negative term $Z h(\varphi) \abs{\sigma}^{2}$ on the left-hand side of \eqref{Apriori:Reg:D:Est} and choosing $Z > \frac{\chi^{2} m_{1}^{2}}{m_{0}} + 4$ yields the differential inequality
\begin{align*}
& \frac{\dd}{\dt} \left ( \norm{\Psi(\varphi)}_{L^{1}} + \norm{\nabla \varphi}_{L^{2}}^{2} + \theta \norm{\sigma -1 }_{L^{2}}^{2} \right ) - C \left (\norm{\Psi(\varphi)}_{L^{1}} + \norm{\nabla \varphi}_{L^{2}}^{2} \right )\\
& \quad  + \norm{\nabla \mu}_{L^{2}}^{2} + \norm{\vec{v}}_{L^{2}}^{2} + \norm{\nabla \sigma}_{L^{2}}^{2} \leq C.
\end{align*}
By \eqref{assump:Initial}, \eqref{assump:Potential} and the Sobolev embedding $H^{1} \subset L^{6}$, it holds that $\Psi(\varphi_{0}) \in L^{1}$.  Hence, by an application of Gronwall's inequality we obtain
\begin{align*}
& \sup_{t \in (0,T]} \left ( \norm{\Psi(\varphi(t))}_{L^{1}} + \norm{\nabla \varphi(t)}_{L^{2}}^{2} + \theta \norm{\sigma(t) - 1}_{L^{2}}^{2} \right ) \\
& \quad + \norm{\nabla \mu}_{L^{2}(Q)}^{2} + \norm{\vec{v}}_{L^{2}(Q)}^{2} + \norm{\nabla \sigma}_{L^{2}(Q)}^{2} \leq C,
\end{align*}
where we have also used that $\theta \norm{\sigma_{0} - 1}_{L^{2}}^{2} \leq \norm{\sigma_{0} - 1}_{L^{2}}^{2}$ as $\theta \in (0,1]$.  Then, using \eqref{Dirichlet:mean:mu} and \eqref{assump:Potential} and the Poincar\'{e} inequality for $\varphi$ and $\mu$ yields
\begin{equation}\label{Dirichlet:Est:1}
\begin{aligned}
& \sup_{t \in (0,T]} \left ( \norm{\Psi(\varphi(t))}_{L^{1}} + \norm{\varphi(t)}_{H^{1}}^{2} + \theta \norm{\sigma}_{L^{2}}^{2} \right ) \\
& \qquad + \norm{\mu}_{L^{2}(0,T;H^{1})}^{2} + \norm{\vec{v}}_{L^{2}(Q)}^{2} + \norm{\sigma}_{L^{2}(0,T;H^{1})}^{2} \leq C.
\end{aligned}
\end{equation}
Next, looking at \eqref{Weak:D:mu} as an elliptic equation for $\varphi$, and using that the potential $\Psi'$ has at most polynomial growth of order 3, one can infer from the boundedness of $\varphi$ in $L^{\infty}(0,T;H^{1})$ that $\Psi'(\varphi)$ is bounded in $L^{2}(0,T;H^{1})$.  Then regularity theory gives $\varphi$ is bounded in $L^{2}(0,T;H^{3})$ and we obtain
\begin{align}\label{Dirichlet:Est:2}
\norm{\Psi'(\varphi)}_{L^{2}(0,T;H^{1})} +
\norm{\varphi}_{L^{2}(0,T;H^{3})} \leq C.
\end{align}
Then, substituting $\xi = q$ in \eqref{Weak:D:Darcy} and the Poincar\'{e} inequality \eqref{Poincare:H10} gives
\begin{align*}
K \norm{\nabla q}_{L^{2}}^{2} & \leq \norm{\Gamma_{\vec{v}}}_{L^{2}} \norm{q}_{L^{2}} + K \norm{\varphi \nabla (\mu + \chi \sigma)}_{L^{2}} \norm{\nabla q}_{L^{2}} \\
& \leq C + \frac{K}{2} \norm{\nabla q}_{L^{2}}^{2} + C \norm{\varphi}_{L^{\infty}}^{2} \norm{\nabla (\mu + \chi \sigma)}_{L^{2}}^{2} \\
& \leq C + \frac{K}{2} \norm{\nabla q}_{L^{2}}^{2} + C \norm{\varphi}_{L^{\infty}(0,T;L^{6})}^{\frac{3}{2}} \norm{\varphi}_{H^{3}}^{\frac{1}{2}}  \norm{\nabla (\mu + \chi \sigma)}_{L^{2}}^{2},
\end{align*}
where we have also used the Gagliardo--Nirenburg inequality \eqref{GagNirenIneq} in three dimensions.  Thus we obtain
\begin{equation}\label{Dirichlet:Est:3}
\begin{aligned}
\int_{0}^{T} \norm{q}_{H^{1}}^{\frac{8}{5}} \dt & \leq C \left ( 1 + \norm{\varphi}_{L^{\infty}(0,T;H^{1})}^{\frac{6}{5}} \int_{0}^{T} \norm{\varphi}_{H^{3}}^{\frac{2}{5}} \norm{\nabla (\mu + \chi \sigma)}_{L^{2}}^{\frac{8}{5}} \dt \right ) \\
& \leq C \left ( 1 + \norm{\varphi}_{L^{2}(0,T;H^{3})}^{\frac{2}{5}} \norm{\nabla (\mu + \chi \sigma)}_{L^{2}(Q)}^{\frac{8}{5}} \right ) \leq C.
\end{aligned}
\end{equation}
Lastly, we see that for any $\zeta \in L^{\frac{8}{3}}(0,T;H^{1})$,
\begin{equation}\label{convection:term}
\begin{aligned}
& \abs{\int_{Q} \varphi \vec{v} \cdot \nabla \zeta} \leq \int_{0}^{T} \norm{\varphi}_{L^{\infty}} \norm{\vec{v}}_{L^{2}} \norm{\nabla \zeta}_{L^{2}} \dt \\
& \quad \leq C \norm{\varphi}_{L^{\infty}(0,T;H^{1})}^{\frac{3}{4}} \norm{\vec{v}}_{L^{2}(Q)} \norm{\varphi}_{L^{2}(0,T;H^{3})}^{\frac{1}{4}} \norm{\zeta}_{L^{\frac{8}{3}}(0,T;H^{1})}  \leq C \norm{\zeta}_{L^{\frac{8}{3}}(0,T;H^{1})},
\end{aligned}
\end{equation}
and so from \eqref{Weak:D:varphi}, we obtain
\begin{align}\label{Dirichlet:Est:4}
\norm{\pd_{t}\varphi}_{L^{\frac{8}{5}}(0,T;(H^{1})^{*})} \leq C \left ( 1 + \norm{\nabla \mu}_{L^{2}(Q)} + \norm{\div(\varphi \vec{v})}_{L^{\frac{8}{5}}(0,T;(H^{1})^{*})} \right ) \leq C.
\end{align}
Similarly, from \eqref{Reg:weak:D:sigma} we see that
\begin{align}\label{Dirichlet:Est:5}
\norm{\theta \pd_{t} \sigma}_{L^{2}(0,T;H^{-1})} \leq \norm{\sigma}_{L^{2}(0,T;H^{1})} \leq C.
\end{align}
The a priori estimates \eqref{Dirichlet:Est:1}, \eqref{Dirichlet:Est:2}, \eqref{Dirichlet:Est:3}, \eqref{Dirichlet:Est:4} and \eqref{Dirichlet:Est:5} are sufficient to deduce the existence of a weak solution quadruple $(\varphi^{\theta}, \mu^{\theta}, \sigma^{\theta}, \vec{v}^{\theta}, q^{\theta})$ to \eqref{Reg:prob:1} with the regularities stated in Lemma \ref{lem:Reg:problem:1} which satisfies \eqref{Weak:D:varphi}, \eqref{Weak:D:mu}, \eqref{Weak:D:Darcy}, \eqref{Weak:D:velo}, and \eqref{Reg:weak:D:sigma} for a.e. $t \in (0,T)$ and all $\zeta \in H^{1}$, $\vec{y} \in L^{2}$, $\xi \in H^{1}_{0}$.  We refer the reader to \cite{GLCHD} for the details in passing to the limit.  Let us just mention that thanks to boundedness in $L^{2}(0,T;H^{1}_{0}) \cap H^{1}(0,T;H^{-1})$ and \cite[\S 8, Cor. 4]{Simon} the Galerkin approximations for $\sigma$ converges strongly in $L^{2}(Q)$ and hence also a.e. in $Q$.  Furthermore, the estimate \eqref{Reg:problem:1:Est} is obtained by passing to the limit in the a priori estimates \eqref{Dirichlet:Est:1}, \eqref{Dirichlet:Est:2}, \eqref{Dirichlet:Est:3}, \eqref{Dirichlet:Est:4} and \eqref{Dirichlet:Est:5} for the Galerkin approximation and using weak/weak* lower semi-continuity of the norms.

To complete the proof, it remains to show that $0 \leq \sigma^{\theta} \leq 1$ a.e. in $Q$ by means of a weak comparison principle.  For this we substitute $\xi = (\sigma^{\theta} - 1)_{+} := \max (\sigma^{\theta} - 1, 0)$ and $\xi = (\sigma^{\theta})_{-} := \max (- \sigma^{\theta}, 0)$ in \eqref{Reg:weak:D:sigma}, and note that due to the boundary condition $\sigma^{\theta} = 1$ on $\Sigma$, necessarily $(\sigma^{\theta} - 1)_{+}, (\sigma^{\theta})_{-} \in H^{1}_{0}$.  The former yields
\begin{align*}
& \frac{\theta}{2} \frac{\dd}{\dt} \norm{(\sigma^{\theta} - 1)_{+}}_{L^{2}}^{2}  \\
& \quad = - \norm{\nabla (\sigma^{\theta} - 1)_{+}}_{L^{2}}^{2} - \int_{\Omega} h(\varphi) \abs{(\sigma^{\theta} - 1)_{+}}^{2} + h(\varphi) (\sigma^{\theta} - 1)_{+} \dx  \leq 0,
\end{align*}
and the latter yields
\begin{align*}
\frac{\theta}{2} \frac{\dd}{\dt} \norm{(\sigma^{\theta})_{-}}_{L^{2}}^{2} = - \norm{\nabla (\sigma^{\theta})_{-}}_{L^{2}}^{2} - \int_{\Omega} h(\varphi) \abs{(\sigma^{\theta})_{-}}^{2} \dx \leq 0.
\end{align*}
From both inequalities we infer that for any $t \in (0,T)$,
\begin{align*}
\norm{(\sigma^{\theta}(t)-1)_{+}}_{L^{2}}^{2} \leq \norm{(\sigma_{0} - 1)_{+}}_{L^{2}}^{2} = 0, \quad \norm{(\sigma^{\theta}(t))_{-}}_{L^{2}}^{2} \leq \norm{(\sigma_{0})_{-}}_{L^{2}}^{2} = 0,
\end{align*}
as $0 \leq \sigma_{0} \leq 1$ a.e. in $\Omega$.  This yields that $0 \leq \sigma^{\theta} \leq 1$ a.e. in $Q$.
\end{proof}

At this point, we can neglect the cut-off operator $\mathcal{T}$ in \eqref{Reg:prob:1} and now pass to the limit $\theta \to 0$.  By virtue of \eqref{Reg:problem:1:Est} we have boundedness of $(\varphi^{\theta}, \mu^{\theta}, \sigma^{\theta}, \vec{v}^{\theta}, q^{\theta})$ in the Bochner spaces stated in Definition \ref{defn:Weaksoln:Dirichlet}.  Denoting the limit functions as $(\varphi, \mu, \sigma, \vec{v}, q)$, it is a standard argument to show that the above quintuple is a weak solution of \eqref{CHDN}-\eqref{Dirichlet} in the sense of Definition \ref{defn:Weaksoln:Dirichlet}, and thus we omit the details. 

\section{Robin boundary conditions for the pressure}\label{sec:Robin}
To prove Theorem \ref{thm:Robin} for the system \eqref{CHDN}-\eqref{Robin}, it suffices to prove the existence of a weak solution $(\varphi^{\theta}, \mu^{\theta}, \sigma^{\theta}, \vec{v}^{\theta}, p^{\theta})$ to the regularized problem consisting of \eqref{Reg:1:Div}, \eqref{Reg:1:varphi}, \eqref{Reg:1:chem}, \eqref{Reg:1:sigma} and 
\begin{align}\label{Reg:2:Darcy}
\vec{v} = - K \left ( \nabla p - \left ( \mu + \chi \sigma \right ) \nabla \varphi \right ) \; \text{ in } Q,
\end{align}
along with the initial-boundary conditions
\begin{align*}
\pdnu \varphi = 0, \quad \mu = 0, \quad K \pdnu p = a(g-p), \quad \sigma = 1 & \text{ on } \Sigma, \\
\varphi(0) = \varphi_{0}, \quad \sigma(0) = \sigma_{0} & \text{ in } \Omega,
\end{align*}
and then pass to the limit $\theta \to 0$.  We focus on obtaining a priori estimates for the regularized problem and omit the argument for $\theta \to 0$ as it follows straightforwardly from the a priori estimates.

\begin{lemma}\label{lem:Reg:problem:2}
Under Assumption \ref{assump:Main} and $0 \leq \sigma_{0} \leq 1$ a.e. in $\Omega$, for any $\theta \in (0,1]$, there exists a weak solution quintuple $(\varphi^{\theta}, \mu^{\theta}, \sigma^{\theta}, \vec{v}^{\theta}, p^{\theta})$ in the sense of Definition \ref{defn:Weaksoln:Robin} with additionally $\sigma^{\theta} \in H^{1}(0,T;H^{-1})$, $\sigma^{\theta}(0) = \sigma_{0}$ a.e. in $\Omega$, and \eqref{Weak:D:sigma} is replaced by \eqref{Reg:weak:D:sigma}.  Furthermore, there exists a positive constant $C$ not depending on $\theta, \varphi^{\theta}, \mu^{\theta}, \sigma^{\theta}, \vec{v}^{\theta}, p^{\theta}$ such that \begin{equation}\label{Reg:problem:2:Est}
\begin{aligned}
& \norm{\Psi(\varphi^{\theta})}_{L^{\infty}(0,T;L^{1})} + \norm{\Psi'(\varphi^{\theta})}_{L^{2}(0,T;H^{1})} + \norm{\varphi^{\theta}}_{L^{\infty}(0,T;H^{1}) \cap L^{2}(0,T;H^{3})} \\
& \quad + \norm{\mu^{\theta}}_{L^{2}(0,T;H^{1})}  + \norm{\vec{v}^{\theta}}_{L^{2}(Q)} + \norm{p^{\theta}}_{L^{\frac{8}{5}}(0,T;H^{1})} + \norm{\pd_{t}\varphi^{\theta}}_{L^{\frac{8}{5}}(0,T;H^{-1})} \\
& \quad + \norm{\sigma^{\theta}}_{L^{2}(0,T;H^{1})} + \norm{\theta \pd_{t}\sigma^{\theta}}_{L^{2}(0,T;H^{-1})} + \norm{p^{\theta}}_{L^{2}(\Sigma)} \leq C.
\end{aligned}
\end{equation}
\end{lemma}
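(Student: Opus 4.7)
The plan is to mirror the proof of Lemma~\ref{lem:Reg:problem:1} essentially step by step: construct Galerkin approximations of the regularized system with cut-off and $\theta \pd_t \sigma$ term, derive a single energy identity from carefully chosen test functions plus an auxiliary $L^2$-estimate on the pressure, verify $0 \leq \sigma^{\theta} \leq 1$ via weak comparison, and pass to the Galerkin limit. I will only highlight the modifications coming from the Robin condition $K \pdnu p = a(g - p)$ and the Dirichlet condition $\mu = 0$ on $\Sigma$.

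At the Galerkin level, I would test \eqref{Weak:R:varphi} with $\xi = \mu + \chi(\sigma - 1) \in H^1_0$ (admissible because both $\mu$ and $\sigma - 1$ vanish on $\Gamma$), \eqref{Weak:D:mu} with $\zeta = \pd_t \varphi$, \eqref{Weak:R:velo} with $\vec{y} = K^{-1}\vec{v}$, and \eqref{Reg:weak:D:sigma} with $\xi = Z(\sigma - 1)$ for a sufficiently large constant $Z$. After integrating $\int \nabla p \cdot \vec{v}$ by parts, using $\div \vec{v} = \Gamma_{\vec{v}}$ and $\vec{v} \cdot \vec{n} = a(p-g)$ (which follows from $\pdnu \varphi = 0$ and the Robin condition), the sum has the schematic form
\[
\frac{\dd}{\dt} \mathcal{E}_\theta + c \bigl( \norm{\nabla \mu}_{L^2}^2 + \norm{\vec{v}}_{L^2}^2 + Z \norm{\nabla \sigma}_{L^2}^2 + a \norm{p}_{L^2(\Gamma)}^2 \bigr) \leq \mathrm{RHS},
\]
where $\mathcal{E}_\theta := A\norm{\Psi(\varphi)}_{L^1} + \tfrac{B}{2}\norm{\nabla \varphi}_{L^2}^2 + \tfrac{Z\theta}{2}\norm{\sigma - 1}_{L^2}^2$. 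The crucial gain over \eqref{Intro:Apriori:Robin} is that the new term $a\norm{p}_{L^2(\Gamma)}^2$ sits on the left, so $\int_\Gamma apg$ on the RHS is absorbed through Young's inequality. Because $\mu \in H^1_0$, the Poincar\'{e} inequality \eqref{Poincare:H10} now applies directly to $\mu$, so unlike the Neumann situation of Section~\ref{sec:quadratic} no estimate of $\overline{\mu}$ is required, and the polynomial growth allowed by \eqref{assump:Potential} suffices.

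The main obstacle is controlling the source contribution $\int_\Omega \Gamma_{\vec{v}}\, p \dx$, which requires an $L^2$-bound on $p$. I would reproduce the duality argument leading to \eqref{Dirichlet:pressure:Est}, but with the Dirichlet inverse Laplacian replaced by its Robin analogue: set $f := (-\Laplace_R)^{-1}(p, a/K, 0)$, so that $-\Laplace f = p$ in $\Omega$ and $\pdnu f + (a/K) f = 0$ on $\Gamma$, satisfying $\norm{f}_{H^2} \leq C\norm{p}_{L^2}$. Using $\zeta = f$ in \eqref{Weak:R:Darcy} and $\zeta = p$ in the variational identity satisfied by $f$ gives
\[
\norm{p}_{L^2}^2 \leq C \bigl( \norm{\Gamma_{\vec{v}}}_{L^2} \norm{f}_{L^2} + \norm{\varphi \nabla (\mu + \chi \sigma)}_{L^{6/5}} \norm{\nabla f}_{L^6} + \norm{g}_{L^2(\Gamma)} \norm{f}_{L^2(\Gamma)} \bigr),
\]
which, using $H^1 \subset L^3$ together with the elliptic regularity bound for $f$, delivers $\norm{p}_{L^2} \leq C(1 + \norm{\varphi}_{H^1} \norm{\nabla(\mu + \chi\sigma)}_{L^2} + \norm{g}_{L^2(\Gamma)})$. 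Absorbing all remaining cross terms and the source contributions via Young, and then invoking Gronwall, yields the bounds on $\Psi(\varphi), \varphi, \mu, \vec{v}, \sigma$ and $p|_\Sigma$ listed in \eqref{Reg:problem:2:Est}.

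The remaining estimates follow essentially verbatim from Lemma~\ref{lem:Reg:problem:1}: elliptic regularity for \eqref{Weak:D:mu} combined with the polynomial growth of $\Psi'$ yields $\varphi \in L^2(0,T; H^3)$ and $\Psi'(\varphi) \in L^2(0,T; H^1)$; testing \eqref{Weak:R:Darcy} with $\zeta = p$ and applying the Gagliardo--Nirenberg inequality \eqref{GagNirenIneq} repeats the argument \eqref{Dirichlet:Est:3} to give $p \in L^{8/5}(0,T; H^1)$; the convection estimate \eqref{convection:term} plugged into \eqref{Weak:R:varphi} supplies $\pd_t \varphi \in L^{8/5}(0,T; H^{-1})$; and $\theta \pd_t \sigma \in L^2(0,T; H^{-1})$ follows from \eqref{Reg:weak:D:sigma}. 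The weak comparison argument yielding $0 \leq \sigma^\theta \leq 1$ is identical to the one closing the proof of Lemma~\ref{lem:Reg:problem:1} (the Dirichlet condition on $\sigma$ makes both $(\sigma^\theta - 1)_+$ and $(\sigma^\theta)_-$ admissible $H^1_0$ test functions), and the passage to the Galerkin limit is standard once the uniform bounds are in hand.
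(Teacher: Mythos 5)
Your proposal follows the paper's proof essentially step for step: the same choice of test functions ($\xi = \mu + \chi(\sigma-1)$, $\zeta = \pd_t\varphi$, $\vec{y} = K^{-1}\vec{v}$, $\xi = Z(\sigma-1)$), the same duality argument with the Robin inverse Laplacian for the $L^{2}$-bound on $p$, and the same Gronwall/elliptic-regularity/comparison-principle conclusion. Three points deserve correction before the schematic energy inequality actually holds. First, the cross terms $\int_{\Omega}\varphi\vec{v}\cdot\nabla(\mu+\chi(\sigma-1))$ (from the $\varphi$-equation) and $\int_{\Omega}(\mu+\chi\sigma)\nabla\varphi\cdot\vec{v}$ (from the velocity identity) do not cancel termwise and cannot be absorbed by Young's inequality, since $\abs{\int\varphi\vec{v}\cdot\nabla\mu}$ would require $L^{\infty}$-control of $\varphi$; they must be combined via the product rule and integration by parts, using $\div\vec{v}=\Gamma_{\vec{v}}$ and the fact that $\mu+\chi(\sigma-1)\in H^{1}_{0}$ kills the boundary term, which reduces them to the harmless $\int_{\Omega}\chi\nabla\varphi\cdot\vec{v}-\Gamma_{\vec{v}}\varphi(\mu+\chi(\sigma-1))$. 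Second, because the $\varphi$-equation is tested with $\mu+\chi(\sigma-1)$ while \eqref{Weak:D:mu} is tested with $\pd_t\varphi$, the energy functional acquires an extra $-\chi\int_{\Omega}\varphi$, which is controlled using the quadratic lower bound $\Psi(s)\geq C_{1}|s|^{2}-C_{2}$; your $\mathcal{E}_{\theta}$ omits it. Third, in the pressure duality estimate the Robin form of Darcy's law produces $\norm{(\mu+\chi\sigma)\nabla\varphi}_{L^{6/5}}\leq\norm{\mu+\chi\sigma}_{L^{6}}\norm{\nabla\varphi}_{L^{3/2}}$, not $\norm{\varphi\nabla(\mu+\chi\sigma)}_{L^{6/5}}$ as in the Dirichlet case; this is only a transcription slip, as either product is Young/Gronwall-compatible, but the correct version is the one that makes the Dirichlet condition on $\mu$ essential (via \eqref{Poincare:H10} applied to $\mu$ and $\sigma-1$).
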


\begin{proof}
Once again we will only derive the a priori estimates.  Substituting $\xi = Z(\sigma - 1)$ in \eqref{Reg:weak:D:sigma} for some constant $Z > 0$ yet to be determined, $\zeta = \pd_{t}\varphi$ in \eqref{Weak:D:mu}, $\xi = \mu + \chi (\sigma - 1)$ in \eqref{Weak:R:varphi}, $\vec{y} = K^{-1} \vec{v}$ in \eqref{Weak:R:velo}, and summing leads to
\begin{equation}\label{Aux:CHD:Robin:Est}
\begin{aligned}
& \frac{\dd}{\dt} \int_{\Omega} A \Psi(\varphi) + \frac{B}{2} \abs{\nabla \varphi}^{2} - \chi \varphi + \frac{Z}{2} \theta \abs{\sigma - 1}^{2} \dx  \\
& \quad  + \int_{\Omega} m(\varphi) \abs{\nabla \mu}^{2} + \frac{1}{K} \abs{\vec{v}}^{2} + Z \abs{\nabla \sigma}^{2} + Z h(\varphi) \abs{\sigma}^{2} \dx + a \norm{p}_{L^{2}(\Gamma)}^{2} \\ 
& \quad = \int_{\Omega} - \chi m(\varphi) \nabla \mu \cdot \nabla \sigma + \Gamma_{\varphi} (\mu + \chi (\sigma - 1)) + Z h(\varphi) \sigma \dx \\
& \qquad + \int_{\Omega} p \Gamma_{\vec{v}} + \varphi \vec{v} \cdot \nabla (\mu + \chi (\sigma -1 )) + (\mu + \chi \sigma) \nabla \varphi \cdot \vec{v} \dx + \int_{\Gamma} a g p \dHaus.
\end{aligned}
\end{equation}
Using that $(\mu + \chi (\sigma - 1)) = 0$ on $\Gamma$ and the product rule, we have
\begin{align*}
& \int_{\Omega} \varphi \vec{v} \cdot \nabla (\mu + \chi (\sigma - 1)) + (\mu + \chi \sigma) \nabla \varphi \cdot \vec{v} \dx \\
& \quad = \int_{\Omega} \chi \vec{v} \cdot \nabla \varphi - \Gamma_{\vec{v}} \varphi (\mu + \chi(\sigma - 1)) \dx.
\end{align*}
Thus, we obtain the following identity from integrating \eqref{Aux:CHD:Robin:Est} in time
\begin{equation}\label{Aux:CHD:Robin:Est:2}
\begin{aligned}
& \int_{\Omega} \left ( A \Psi(\varphi) + \frac{B}{2} \abs{\nabla \varphi}^{2} - \chi \varphi+ \frac{Z}{2} \theta \abs{\sigma - 1}^{2} \right )(t) \dx \\
& \qquad  + \int_{\Omega_{t}} \left ( m(\varphi) \abs{\nabla \mu}^{2} + \frac{1}{K} \abs{\vec{v}}^{2} + Z \abs{\nabla \sigma}^{2} + Z h(\varphi) \abs{\sigma}^{2} \right ) + \int_{\Gamma_{t}} a \abs{p}^{2} \\
& \quad = \int_{\Omega_{t}} \left ( - \chi m(\varphi) \nabla \mu \cdot \nabla \sigma + \chi \nabla \varphi \cdot \vec{v}  + Z h(\varphi) \sigma \right ) + \int_{\Gamma_{t}} a g p \\
& \qquad + \int_{\Omega_{t}} \left ( \Gamma_{\vec{v}} (p - \varphi (\mu + \chi (\sigma - 1))) + \Gamma_{\varphi}(\mu + \chi (\sigma - 1))  \right ) \\
& \qquad + \int_{\Omega} \left (A \Psi(\varphi_{0}) + \frac{B}{2} \abs{\nabla \varphi_{0}}^{2} - \chi \varphi_{0} + \frac{Z}{2} \theta \abs{\sigma_{0} - 1}^{2} \right ) \dx =: I_{1} + I_{2} + I_{3}.
\end{aligned}
\end{equation}
Note that by \eqref{assump:Potential} and the fact that $\theta \in (0,1]$, the third term $I_{3}$ on the right-hand side of \eqref{Aux:CHD:Robin:Est:2} is bounded, and by Young's inequality
\begin{align*}
\abs{\int_{\Omega} \chi \varphi \dx} \leq \chi \abs{\Omega}^{\frac{1}{2}} \norm{\varphi}_{L^{2}} \leq \frac{A}{2C_{1}} \norm{\varphi}_{L^{2}}^{2} + C \leq \frac{A}{2} \norm{\Psi(\varphi)}_{L^{1}} + C,
\end{align*}
which implies that
\begin{align*}
\int_{\Omega} \left ( A \Psi(\varphi) - \chi \varphi \right ) (t) \dx \geq \frac{A}{2} \norm{\Psi(\varphi(t))}_{L^{1}} - C.
\end{align*}
Next, for $I_{1}$, using the Poincar\'{e} inequality in $L^{1}$ on $(\sigma - 1)$, H\"{o}lder's inequality and Young's inequality, we have
\begin{align*}
\abs{I_{1}} & \leq \frac{m_{0}}{4} \norm{\nabla \mu}_{L^{2}(Q)}^{2} + \left ( \frac{\chi^{2} m_{1}^{2}}{m_{0}} + 1 \right ) \norm{\nabla \sigma}_{L^{2}(Q)}^{2} + \frac{1}{2K} \norm{\vec{v}}_{L^{2}(Q)}^{2} + \frac{a}{2} \norm{p}_{L^{2}(\Sigma)}^{2} \\
& \quad + C \left ( 1 + Z^{2} + \norm{\nabla \varphi}_{L^{2}(Q)}^{2} + \norm{g}_{L^{2}(\Sigma)}^{2} \right ).
\end{align*}
It remains to estimate $I_{2}$, and we first obtain an estimate on $\norm{p}_{L^{2}}$ by looking at the pressure system, whose weak formulation is given by \eqref{Weak:R:Darcy}.  Let $f := (-\Laplace_{R})^{-1}(p/K,a/K,0)$, so that
\begin{align*}
\int_{\Omega} K \nabla f \cdot \nabla \phi \dx + \int_{\Gamma} a f \phi \dHaus = \int_{\Omega} p \phi \dx \text{ for all } \phi \in H^{1}.
\end{align*}
Substituting $\zeta = f$ in \eqref{Weak:R:Darcy} and $\phi = p$ in the above leads to
\begin{equation}
\begin{aligned}
\norm{p}_{L^{2}}^{2} &  = \int_{\Omega} \Gamma_{\vec{v}} f + K (\mu + \chi \sigma) \nabla \varphi \cdot \nabla f \dx + \int_{\Gamma} a g f \dHaus \\
& \leq \norm{\Gamma_{\vec{v}}}_{L^{2}} \norm{f}_{L^{2}} + K \norm{(\mu + \chi \sigma) \nabla \varphi}_{L^{\frac{6}{5}}} \norm{\nabla f}_{L^{6}} + a \norm{g}_{L^{2}(\Gamma)} \norm{f}_{L^{2}(\Gamma)} \\
& \leq C \left ( 1 + \norm{g}_{L^{2}(\Gamma)} + \norm{(\mu + \chi \sigma) \nabla \varphi}_{L^{\frac{6}{5}}} \right ) \norm{f}_{H^{2}}.
\end{aligned}
\end{equation}
Using the elliptic regularity estimate $\norm{f}_{H^{2}} \leq C \norm{p}_{L^{2}}$, we obtain, analogous to \eqref{Dirichlet:pressure:Est},
\begin{equation}\label{Robin:Pressure:L2}
\begin{aligned}
\norm{p}_{L^{2}} & \leq C \left ( 1 + \norm{g}_{L^{2}(\Gamma)} + \norm{(\mu + \chi \sigma) \nabla \varphi}_{L^{\frac{6}{5}}} \right ) \\
& \leq C \left ( 1 + \norm{g}_{L^{2}(\Gamma)} + \norm{\mu + \chi \sigma}_{L^{6}} \norm{\nabla \varphi}_{L^{\frac{3}{2}}} \right ) \\
& \leq C \left ( 1 + \norm{g}_{L^{2}(\Gamma)} + \left (1+ \norm{\nabla \mu}_{L^{2}} + \norm{\nabla \sigma}_{L^{2}} \right ) \norm{\nabla \varphi}_{L^{\frac{3}{2}}} \right ),
\end{aligned}
\end{equation}
where we have applied the Poincar\'{e} inequality \eqref{Poincare:H10} to $\mu$ and $\sigma - 1$, and the Sobolev embedding $H^{1} \subset L^{6}$.  Using the boundedness of $\Gamma_{\vec{v}}$ and $\Gamma_{\varphi}$, \eqref{assump:Potential} and $\norm{p}_{L^{1}(Q)} \leq C \norm{p}_{L^{1}(0,T;L^{2})}$, we see that
\begin{align*}
& \abs{I_{2}} \leq C \left (1 + \norm{p}_{L^{1}(Q)} + \left ( 1 + \norm{\varphi}_{L^{2}(Q)} \right ) \left ( \norm{\mu}_{L^{2}(Q)} + \norm{\sigma - 1}_{L^{2}(Q)} \right ) \right ) \\
& \, \leq C \left ( 1 + \norm{g}_{L^{2}(\Sigma)} + \norm{\nabla \varphi}_{L^{2}(Q)}^{2} + \norm{\varphi}_{L^{2}(Q)}^{2} \right ) + \frac{m_{0}}{4} \norm{\nabla \mu}_{L^{2}(Q)}^{2} + \norm{\nabla \sigma}_{L^{2}(Q)}^{2} \\
& \, \leq C \left ( 1 + \norm{\Psi(\varphi)}_{L^{1}(Q)} + \norm{\nabla \varphi}_{L^{2}(Q)}^{2} + \norm{g}_{L^{2}(\Sigma)}^{2} \right )  + \frac{m_{0}}{4} \norm{\nabla \mu}_{L^{2}(Q)}^{2} + \norm{\nabla \sigma}_{L^{2}(Q)}^{2}.
\end{align*}
Thus, choosing $Z > \frac{\chi^{2} m_{1}^{2}}{m_{0}^{2}} + 2$, we obtain from \eqref{Aux:CHD:Robin:Est:2} the inequality
\begin{align*}
& \left ( \norm{\Psi(\varphi(t))}_{L^{1}} + \norm{\nabla \varphi(t)}_{L^{2}}^{2} + \theta \norm{\sigma(t) - 1}_{L^{2}}^{2} \right ) \\
& \qquad + \norm{\nabla \mu}_{L^{2}(Q)}^{2} + \norm{\vec{v}}_{L^{2}(Q)}^{2} + \norm{\nabla \sigma}_{L^{2}(Q)}^{2} + \norm{p}_{L^{2}(\Sigma)}^{2} \\
& \quad \leq C \left ( 1 + \norm{g}_{L^{2}(\Sigma)}^{2} + \norm{\Psi(\varphi)}_{L^{1}(Q)} + \norm{\nabla \varphi}_{L^{2}(Q)}^{2} \right ),
\end{align*}
for all $t \in (0,T]$.  Applying the integral version of Gronwall's inequality  \cite[Lem. 3.1]{GLNeumann}, we obtain
\begin{equation}\label{Robin:Est}
\begin{aligned}
& \sup_{t \in (0,T]} \left ( \norm{\Psi(\varphi(t))}_{L^{1}} + \norm{\nabla \varphi(t)}_{L^{2}}^{2} + \theta \norm{\sigma(t)-1}_{L^{2}}^{2} \right ) \\
& \quad + \norm{\nabla \mu}_{L^{2}(Q)}^{2} + \norm{\nabla \sigma}_{L^{2}(Q)}^{2} + \norm{\vec{v}}_{L^{2}(Q)}^{2} + \norm{p}_{L^{2}(\Sigma)}^{2} \leq C.
\end{aligned}
\end{equation}
Then, using \eqref{assump:Potential} and the Poincar\'{e} inequality for $\mu$ and $\sigma$, this yields
\begin{equation}\label{Robin:Est:1}
\begin{aligned}
& \sup_{t \in (0,T]} \left ( \norm{\Psi(\varphi(t))}_{L^{1}} + \norm{\varphi(t)}_{H^{1}}^{2} + \theta \norm{\sigma(t) -1}_{L^{2}}^{2} \right ) \\
& \quad  + \norm{\mu}_{L^{2}(0,T;H^{1})}^{2} + \norm{\sigma}_{L^{2}(0,T;H^{1})}^{2} + \norm{\vec{v}}_{L^{2}(Q)}^{2} + \norm{p}_{L^{2}(\Sigma)}^{2} \leq C.
\end{aligned}
\end{equation}
Analogous to the Dirichlet case, we have
\begin{align}\label{Robin:Est:2}
\norm{\Psi'(\varphi)}_{L^{2}(0,T;H^{1})} +
\norm{\varphi}_{L^{2}(0,T;H^{3})} \leq C.
\end{align}
Then, from \eqref{Weak:R:Darcy} and the Poincar\'{e} inequality \eqref{Poincare:Robin}, it holds that
\begin{align*}
& K \norm{\nabla p}_{L^{2}}^{2} + \frac{a}{2} \norm{p}_{L^{2}(\Gamma)}^{2} \leq \norm{\Gamma_{\vec{v}}}_{L^{2}} \norm{p}_{L^{2}} + K \norm{(\mu + \chi \sigma) \nabla \varphi}_{L^{2}} \norm{\nabla p}_{L^{2}} + \frac{a}{2} \norm{g}_{L^{2}(\Gamma)}^{2} \\
& \quad \leq C \left ( 1 + \norm{g}_{L^{2}(\Gamma)}^{2} \right ) + \frac{K}{2} \norm{\nabla p}_{L^{2}}^{2} + \frac{a}{4} \norm{p}_{L^{2}(\Gamma)}^{2} + K \norm{(\mu + \chi \sigma) \nabla \varphi}_{L^{2}}^{2},
\end{align*}
which implies that
\begin{align}\label{Robin:nablap:Est}
\norm{p}_{H^{1}} \leq C \left ( 1 + \norm{g}_{L^{2}(\Gamma)} + \norm{(\mu + \chi \sigma) \nabla \varphi}_{L^{2}} \right ).
\end{align}
By the Gagliardo--Nirenburg inequality \eqref{GagNirenIneq} for $d = 3$, we see that
\begin{align}\label{GN:nablavarphi:L3}
\norm{\nabla \varphi}_{L^{3}} \leq C \norm{\varphi}_{H^{3}}^{\frac{1}{4}} \norm{\varphi}_{L^{6}}^{\frac{3}{4}},
\end{align}
and thus $(\mu + \chi \sigma) \nabla \varphi \in L^{\frac{8}{5}}(0,T;L^{2})$.  From \eqref{Robin:nablap:Est} this implies that 
\begin{align}\label{Robin:Est:3}
\norm{p}_{L^{\frac{8}{5}}(0,T;H^{1})} \leq C.
\end{align}
Analogous to \eqref{convection:term}, for $\xi \in L^{\frac{8}{3}}(0,T;H^{1}_{0})$, using that $\varphi \in L^{\infty}(0,T;H^{1}) \cap L^{2}(0,T;H^{3})$ and $\vec{v} \in L^{2}(Q)$ leads to
\begin{align*}
\abs{\int_{Q} \varphi \vec{v} \cdot \nabla \xi } \leq C \norm{\xi}_{L^{\frac{8}{3}}(0,T;H^{1}_{0})},
\end{align*}
which in turn gives
\begin{align}
\label{Robin:Est:4}
\norm{\pd_{t}\varphi}_{L^{\frac{8}{5}}(0,T;H^{-1})} \leq C
\end{align}
by the inspection of \eqref{Weak:R:varphi}. Similarly, by inspection of \eqref{Reg:weak:D:sigma}, the a priori estimate \eqref{Dirichlet:Est:5} is also valid.

The a priori estimates \eqref{Dirichlet:Est:5}, \eqref{Robin:Est:1}, \eqref{Robin:Est:2}, \eqref{Robin:Est:3} and \eqref{Robin:Est:4}, together with a Galerkin approximation are sufficient to deduce the existence of a quintuple $(\varphi^{\theta}, \mu^{\theta}, \sigma^{\theta}, \vec{v}^{\theta}, p^{\theta})$ satisfying the assertions of Lemma \ref{lem:Reg:problem:2}.  Once again, \eqref{Reg:problem:2:Est} follows from weak/weak* lower semi-continuity of the norms, and the assertion $0 \leq \sigma^{\theta} \leq 1$ a.e. in $Q$ follows from a weak comparison principle as in the proof of Lemma \ref{lem:Reg:problem:1}.
\end{proof}

\begin{remark}\label{Remark:WhyDirichlet}
The necessity of a Dirichlet condition for $\mu$ is due to the fact that we cannot control $\norm{\mu \nabla \varphi}_{L^{\frac{6}{5}}}$ in \eqref{Robin:Pressure:L2} simply with the left-hand side of \eqref{Aux:CHD:Robin:Est:2} if we assume $\pdnu \mu = 0$ on $\Sigma$.  One could consider the splitting
\begin{align*}
\norm{\mu \nabla \varphi}_{L^{\frac{6}{5}}} & \leq \norm{(\mu - \mean{\mu}) \nabla \varphi}_{L^{\frac{6}{5}}} + \abs{\mean{\mu}} \norm{\nabla \varphi}_{L^{\frac{6}{5}}} \leq \norm{\mu - \mean{\mu}}_{L^{6}} \norm{\nabla \varphi}_{L^{\frac{3}{2}}} + \abs{\mean{\mu}} \norm{\nabla \varphi}_{L^{\frac{6}{5}}} \\
& \leq C \norm{\nabla \mu}_{L^{2}} \norm{\nabla \varphi}_{L^{\frac{3}{2}}} + C \left ( 1 + \norm{\sigma - 1}_{L^{2}} + \norm{\Psi'(\varphi)}_{L^{1}} \right ) \norm{\nabla \varphi}_{L^{\frac{6}{5}}},
\end{align*}
and in order to control the second term, it is desirable to have an estimate of the form
\begin{align*}
\norm{\Psi'(\varphi)}_{L^{1}}^{2} \leq C \left ( 1 + \norm{\Psi(\varphi)}_{L^{1}}\right ).  
\end{align*}
This leads to the situation encountered in \cite{GLNeumann} and restricts $\Psi$ to have quadratic growth.  Furthermore, the ansatz in \cite{GLCHD,JWZ} is to consider the splitting
\begin{align*}
\abs{\int_{\Omega} \Gamma_{\vec{v}} (p - \mu \varphi) \dx} & = \abs{\int_{\Omega} \Gamma_{\vec{v}} (p - \mean{\mu} \varphi) + \Gamma_{\vec{v}} (\mean{\mu} - \mu) \varphi \dx} \\
&  \leq \abs{\int_{\Omega} \Gamma_{\vec{v}}(p - \mean{\mu} \varphi) \dx } + C \norm{\nabla \mu}_{L^{2}} \norm{\varphi}_{L^{2}}.
\end{align*}
If $p$ satisfies the Darcy law \eqref{Robin} with the boundary condition $\pdnu p = 0$ on $\Sigma$, and if $\Gamma_{\vec{v}}$ has zero mean, then we can write
\begin{align*}
p = (-\Laplace_{N})^{-1} \left ( \Gamma_{\vec{v}} /K  - \div ((\mu - \mean{\mu} + \chi \sigma) \nabla \varphi) - \mean{\mu} \div (\nabla (\varphi - \mean{\varphi})) \right ),
\end{align*}
where for $f \in L^{2}$ with $\mean{f} = \frac{1}{\abs{\Omega}} \int_{\Omega} f \dx = 0$, we denote $u := (-\Laplace_{N})^{-1}(f) \in H^{1}$ as the unique weak solution to
\begin{align*}
- \Laplace u = f \text{ in } \Omega, \quad \pdnu u = 0 \text{ on } \Gamma \text{ with } \mean{u} = 0.
\end{align*}
A short calculation shows that
\begin{align*}
-(-\Laplace_{N})^{-1} ( \div (\mean{\mu} \nabla (\varphi - \mean{\varphi}))) = \mean{\mu}(\varphi - \mean{\varphi}),
\end{align*}
and so
\begin{align*}
\int_{\Omega} \Gamma_{\vec{v}} (p - \mean{\mu} \varphi) \dx = \int_{\Omega} \Gamma_{\vec{v}} \left ( (-\Laplace_{N})^{-1} \left ( \Gamma_{\vec{v}}/K -  \div ((\mu - \mean{\mu} + \chi \sigma) \nabla \varphi) \right ) \right ) - \Gamma_{\vec{v}} \mean{\mu} \, \mean{\varphi} \dx.
\end{align*}
In \cite{GLCHD,JWZ}, $\Gamma_{\vec{v}}$ has zero mean, and so the last term on the right-hand side vanishes, but this is not the case in our present setting, and thus the approach of \cite{GLCHD,JWZ} seems not to give any advantage in deriving a priori estimates.
\end{remark}

\section{Neumann boundary conditions for the chemical potential}\label{sec:quadratic}
In this section, let us state an analogous result to Lemma \ref{lem:Reg:problem:2} for the regularized problem consisting of \eqref{Reg:1:Div}, \eqref{Reg:1:varphi}, \eqref{Reg:1:chem}, \eqref{Reg:1:sigma} and 
\eqref{Reg:2:Darcy}, but now we consider the boundary conditions
\begin{align}\label{Aux:Neumann:bc}
\pdnu \varphi = \pdnu \mu = 0, \quad K \pdnu p = a(g-p) \text{ on } \Sigma,
\end{align}
and \eqref{assump:quadratic} instead of \eqref{assump:Potential}.  The assertion is formulated as follows.

\begin{lemma}\label{lem:Reg:problem:3}
Under Assumption \ref{assump:Main} $($with \eqref{assump:quadratic} instead of \eqref{assump:Potential}$)$ and $0 \leq \sigma_{0} \leq 1$ a.e. in $\Omega$, for any $\theta \in (0,1]$, there exists a weak solution quintuple $(\varphi^{\theta}, \mu^{\theta}, \sigma^{\theta}, \vec{v}^{\theta}, p^{\theta})$ in the sense of Definition \ref{defn:Weaksoln:Neumann} with additionally $\sigma^{\theta} \in H^{1}(0,T;H^{-1})$, $\sigma^{\theta}(0) = \sigma_{0}$ a.e. in $\Omega$, and \eqref{Weak:D:sigma} is replaced by \eqref{Reg:weak:D:sigma}.  Furthermore, there exists a positive constant $C$ not depending on $\theta, \varphi^{\theta}, \mu^{\theta}, \sigma^{\theta}, \vec{v}^{\theta}, p^{\theta}$ such that \begin{equation}\label{Reg:problem:3:Est}
\begin{aligned}
& \norm{\Psi(\varphi^{\theta})}_{L^{\infty}(0,T;L^{1})} + \norm{\Psi'(\varphi^{\theta})}_{L^{2}(0,T;H^{1})} + \norm{\varphi^{\theta}}_{L^{\infty}(0,T;H^{1}) \cap L^{2}(0,T;H^{3})} \\
& \quad + \norm{\mu^{\theta}}_{L^{2}(0,T;H^{1})}  + \norm{\vec{v}^{\theta}}_{L^{2}(Q)} + \norm{p^{\theta}}_{L^{\frac{8}{5}}(0,T;H^{1})} + \norm{\pd_{t}\varphi^{\theta}}_{L^{\frac{8}{5}}(0,T;(H^{1})^{*})} \\
& \quad + \norm{\sigma^{\theta}}_{L^{2}(0,T;H^{1})} + \norm{\theta \pd_{t}\sigma^{\theta}}_{L^{2}(0,T;H^{-1})} + \norm{p^{\theta}}_{L^{2}(\Sigma)} \leq C.
\end{aligned}
\end{equation}
\end{lemma}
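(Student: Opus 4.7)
The plan is to mimic the Galerkin-based a priori estimate argument of Lemma \ref{lem:Reg:problem:2}, adapted to accommodate the Neumann condition $\pdnu \mu = 0$ in place of the Dirichlet condition $\mu = 0$. At each Galerkin level I would test \eqref{Weak:N:varphi} with $\zeta = \mu + \chi(\sigma - 1)$, \eqref{Weak:D:mu} with $\zeta = \pd_t \varphi$, \eqref{Weak:R:velo} with $\vec{y} = \vec{v}/K$, and \eqref{Reg:weak:D:sigma} with $\xi = Z(\sigma - 1)$ for a constant $Z > 0$ to be chosen large. Using $\div \vec{v} = \Gamma_{\vec{v}}$, integrating by parts in $\int_{\Omega} \nabla p \cdot \vec{v}$ with $\vec{v} \cdot \vec{n} = a(p-g)$, and observing that the two convection-type contributions $-\int_{\Omega} (\nabla \varphi \cdot \vec{v})(\mu + \chi(\sigma-1))$ from \eqref{Weak:N:varphi} and $\int_{\Omega} (\mu + \chi \sigma) \nabla \varphi \cdot \vec{v}$ from the velocity test combine into the benign $\chi \int_{\Omega} \nabla \varphi \cdot \vec{v}$ up to a remainder $-\int_{\Omega} \Gamma_{\vec{v}} \varphi (\mu + \chi(\sigma-1))$, I expect an energy identity entirely analogous to \eqref{Aux:CHD:Robin:Est} but with the additional RHS source $\int_{\Omega} \Gamma_\varphi (\mu + \chi(\sigma-1))$ that must now be controlled without benefiting from the Dirichlet condition on $\mu$. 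The linear term $-\chi \varphi$ on the energy side is absorbed into $A \Psi(\varphi) \geq A C_1 \abs{\varphi}^{2} - A C_2$ by Young's inequality.

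The main obstacle, and the reason that \eqref{assump:quadratic} replaces \eqref{assump:Potential}, is the need to bound $\norm{\mu}_{L^{2}}$ in the absence of a Dirichlet condition. Testing \eqref{Weak:D:mu} with $\zeta = 1$ yields $\abs{\mean{\mu}} \leq C(1 + \norm{\sigma}_{L^{1}} + \norm{\Psi'(\varphi)}_{L^{1}})$, and under \eqref{assump:quadratic} the linear bound $\abs{\Psi'(s)} \leq C(1 + \abs{s})$ together with the quadratic coercivity yields $\norm{\Psi'(\varphi)}_{L^{1}}^{2} \leq C(1 + \norm{\Psi(\varphi)}_{L^{1}})$, so by \eqref{Poincare} one obtains $\norm{\mu}_{L^{2}}^{2} \leq C(1 + \norm{\nabla \mu}_{L^{2}}^{2} + \norm{\Psi(\varphi)}_{L^{1}} + \norm{\sigma - 1}_{L^{2}}^{2})$. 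With this in hand, the source terms $\int_{\Omega} \Gamma_{\vec{v}} \varphi \mu$ and $\int_{\Omega} \Gamma_\varphi \mu$ are controlled by a small multiple of $\norm{\nabla \mu}_{L^{2}}^{2}$ plus Gronwall-compatible terms via $\norm{\varphi}_{L^{2}}^{2} \leq C(1 + \norm{\Psi(\varphi)}_{L^{1}})$. For the pressure term $\int_{\Omega} \Gamma_{\vec{v}} p$, I would reproduce the duality argument leading to \eqref{Robin:Pressure:L2} using $f := (-\Laplace_{R})^{-1}(p/K, a/K, 0)$ and elliptic regularity, giving $\norm{p}_{L^{2}} \leq C(1 + \norm{g}_{L^{2}(\Gamma)} + \norm{(\mu + \chi \sigma) \nabla \varphi}_{L^{\frac{6}{5}}})$; the H\"{o}lder split $\norm{(\mu + \chi \sigma) \nabla \varphi}_{L^{\frac{6}{5}}} \leq \norm{\mu + \chi \sigma}_{L^{6}} \norm{\nabla \varphi}_{L^{\frac{3}{2}}}$ together with $\norm{\mu}_{L^{6}} \leq C \norm{\mu}_{H^{1}} \leq C(\norm{\nabla \mu}_{L^{2}} + \abs{\mean{\mu}})$ again critically depends on \eqref{assump:quadratic}.

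With all problematic contributions dominated by $\eps (\norm{\nabla \mu}_{L^{2}}^{2} + \norm{\nabla \sigma}_{L^{2}}^{2} + \norm{\vec{v}}_{L^{2}}^{2} + a \norm{p}_{L^{2}(\Gamma)}^{2}) + C(1 + \norm{\Psi(\varphi)}_{L^{1}} + \norm{\nabla \varphi}_{L^{2}}^{2} + \norm{g}_{L^{2}(\Gamma)}^{2})$, choosing $Z$ sufficiently large and $\eps$ sufficiently small and applying the integral Gronwall inequality produces the analogue of \eqref{Robin:Est:1}. The remaining estimates\,---\,$\norm{\Psi'(\varphi)}_{L^{2}(0,T;H^{1})} + \norm{\varphi}_{L^{2}(0,T;H^{3})}$ by elliptic regularity on \eqref{Weak:D:mu}, $\norm{p}_{L^{\frac{8}{5}}(0,T;H^{1})}$ by the Poincar\'e inequality \eqref{Poincare:Robin} together with \eqref{GN:nablavarphi:L3} used in \eqref{Robin:nablap:Est}, $\norm{\pd_{t} \varphi}_{L^{\frac{8}{5}}(0,T;(H^{1})^{*})}$ by inspection of \eqref{Weak:N:varphi} using \eqref{convection:term} with $\zeta \in H^{1}$, and $\norm{\theta \pd_{t} \sigma}_{L^{2}(0,T;H^{-1})}$ by inspection of \eqref{Reg:weak:D:sigma}\,---\,follow verbatim from Lemma \ref{lem:Reg:problem:2}. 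The weak comparison principle giving $0 \leq \sigma^{\theta} \leq 1$ a.e.\ in $Q$ is identical to Lemma \ref{lem:Reg:problem:1}, and the subsequent limit $\theta \to 0$ is routine. As highlighted in Remark \ref{Remark:WhyDirichlet}, any attempt to weaken \eqref{assump:quadratic} to general polynomial growth fails precisely because the estimate $\norm{\Psi'(\varphi)}_{L^{1}}^{2} \leq C(1 + \norm{\Psi(\varphi)}_{L^{1}})$ breaks down, and Gronwall can no longer close the energy inequality.
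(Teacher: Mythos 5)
Your proposal is correct and follows essentially the same route as the paper: the same combination of test functions (up to the cosmetic choice of $\mu+\chi(\sigma-1)$ instead of $\mu+\chi\sigma$, which only shifts the harmless terms $-\chi\varphi$ and $\chi\int_\Omega\nabla\varphi\cdot\vec v$ into the identity), the same control of $\abs{\mean{\mu}}$ via $\norm{\Psi'(\varphi)}_{L^{1}}^{2}\leq C(1+\norm{\Psi(\varphi)}_{L^{1}})$ under \eqref{assump:quadratic}, the same duality estimate for $\norm{p}_{L^{2}}$, and Gronwall. The only nitpick is that for $\pd_t\varphi$ the Neumann weak form \eqref{Weak:N:varphi} carries the non-divergence-form term $\nabla\varphi\cdot\vec v\,\zeta$, so one should invoke \eqref{GN:nablavarphi:L3} rather than \eqref{convection:term}, but the estimate is the same in substance.
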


\begin{proof}
Once again we will only derive the a priori estimates and omit the details of the Galerkin approximation.  Substituting $\zeta = \mu + \chi \sigma$ into \eqref{Weak:N:varphi}, and upon adding with the equalities obtained from substituting $\xi = Z (\sigma - 1)$ in \eqref{Reg:weak:D:sigma}, $\zeta = \pd_{t}\varphi$ in \eqref{Weak:D:mu} and $\vec{y} = K^{-1} \vec{v}$ in \eqref{Weak:R:velo} we have
\begin{equation}\label{Aux:CHD:Neumann:Energy:est}
\begin{aligned}
& \frac{\dd}{\dt} \int_{\Omega} A \Psi(\varphi) + \frac{B}{2} \abs{\nabla \varphi}^{2} + \frac{Z}{2} \theta \abs{\sigma - 1}^{2} \dx  \\
& \qquad + \int_{\Omega} m(\varphi) \abs{\nabla \mu}^{2} + \frac{1}{K} \abs{\vec{v}}^{2} + Z \abs{\nabla \sigma}^{2} + Z h(\varphi) \abs{\sigma}^{2} \dx + \int_{\Gamma} a \abs{p}^{2} \dHaus \\
& \quad = \int_{\Omega} - \chi m(\varphi) \nabla \mu \cdot \nabla \sigma + \Gamma_{\varphi}(\mu + \chi \sigma) + \Gamma_{\vec{v}}(p - \varphi(\mu + \chi \sigma)) \dx \\
& \qquad + \int_{\Omega} Z h(\varphi) \sigma \dx + \int_{\Gamma} a g p \dHaus.
\end{aligned}
\end{equation}
For the terms $-\chi m(\varphi) \nabla \mu \cdot \nabla \sigma$ and $Z h(\varphi) \sigma$, as well as the boundary term $a gp$ on the right-hand side we use H\"{o}lder's inequality, Young's inequality and the Poincar\'{e} inequality applied to $(\sigma - 1)$ to obtain
\begin{align*}
& \abs{\int_{\Omega} - \chi m(\varphi) \nabla \mu \cdot \nabla \sigma + Z h(\varphi) (\sigma -1 + 1) \dx + \int_{\Gamma} a g p \dHaus} \\
& \quad \leq \frac{m_{0}}{4} \norm{\nabla \mu}_{L^{2}}^{2} + \left ( \frac{\chi^{2} m_{1}^{2}}{m_{0}^{2}} + 1 \right ) \norm{\nabla \sigma}_{L^{2}}^{2} + \frac{a}{2} \norm{p}_{L^{2}(\Sigma)}^{2} + \frac{a}{2} \norm{g}_{L^{2}(\Sigma)}^{2} + C(1 + Z^{2}).
\end{align*}
Since the pressure $p$ satisfies the same Poisson equation, by following the computations in Section \ref{sec:Robin} and the discussion in Remark \ref{Remark:WhyDirichlet}, we obtain 
\begin{align*}
\norm{p}_{L^{2}} & \leq C \left ( 1 + \norm{g}_{L^{2}(\Gamma)} + \norm{(\mu + \chi (\sigma-1 + 1)) \nabla \varphi}_{L^{\frac{6}{5}}} \right ) \\
&  \leq C \left ( 1 + \norm{g}_{L^{2}(\Gamma)} + \left ( \norm{\nabla \mu}_{L^{2}} + \norm{\nabla \sigma}_{L^{2}} \right ) \norm{\nabla \varphi}_{L^{\frac{3}{2}}} + ( 1 + \abs{\mean{\mu}}  )\norm{\nabla \varphi}_{L^{\frac{6}{5}}} \right ).
\end{align*}
Substituting $\zeta = 1$ in \eqref{Weak:D:mu}, we can estimate the mean of $\mu$ by
\begin{align}\label{Neumann:mean:mu}
\abs{\mean{\mu}} \leq C \left ( \norm{\sigma}_{L^{2}} + \norm{\Psi'(\varphi)}_{L^{1}} \right ),
\end{align}
and so by Young's inequality and the boundedness of $\Gamma_{\vec{v}}$, we see that
\begin{align*}
& \abs{X} := \abs{\int_{\Omega} \Gamma_{\vec{v}} (p - \varphi(\mu - \mean{\mu}) - \varphi( \mean{\mu} + \chi \sigma)) \dx} \\
& \quad \leq C \left ( \norm{p}_{L^{2}} + \norm{\varphi}_{L^{2}} \norm{\nabla \mu}_{L^{2}} + \left ( \norm{\sigma}_{L^{2}} +  \norm{\Psi'(\varphi)}_{L^{1}} \right )\norm{\varphi}_{L^{2}} \right ) \\
& \quad \leq C \left ( 1 + \norm{g}_{L^{2}(\Gamma)} + \left ( 1 + \norm{\nabla \mu}_{L^{2}} + \norm{\nabla \sigma}_{L^{2}} + \norm{\Psi'(\varphi)}_{L^{1}} \right ) \norm{\varphi}_{H^{1}}  \right ) \\
& \quad \leq \frac{m_{0}}{4} \norm{\nabla \mu}_{L^{2}}^{2} + \norm{\nabla \sigma}_{L^{2}}^{2} + C \left ( 1 + \norm{g}_{L^{2}(\Gamma)} + \norm{\Psi'(\varphi)}_{L^{1}}^{2} + \norm{\varphi}_{L^{2}}^{2} + \norm{\nabla \varphi}_{L^{2}}^{2} \right ).
\end{align*}
Using that $\Psi$ has quadratic growth, we can find positive constants $C_{4}$, $C_{5}$ such that
\begin{align*}
\abs{\Psi'(s)} \leq C_{4} \abs{s} + C_{5} \quad \forall s \in \R,
\end{align*}
and so by \eqref{assump:quadratic}
\begin{align}\label{Neumann:Psi':Psi}
\norm{\Psi'(\varphi)}_{L^{1}}^{2} \leq C \left ( 1 + \norm{\varphi}_{L^{2}}^{2} \right ) \leq C \left ( 1 + \norm{\Psi(\varphi)}_{L^{1}} \right ).
\end{align}
This implies that
\begin{align*}
\abs{X} \leq \frac{m_{0}}{4} \norm{\nabla \mu}_{L^{2}}^{2} + \norm{\nabla \sigma}_{L^{2}}^{2} + C \left ( 1 + \norm{g}_{L^{2}(\Gamma)}^{2} + \norm{\Psi(\varphi)}_{L^{1}} + \norm{\nabla \varphi}_{L^{2}}^{2} \right ).
\end{align*}
In a similar fashion, the second term on the right-hand side of \eqref{Aux:CHD:Neumann:Energy:est} can be estimated as
\begin{align*}
& \abs{\int_{\Omega} \Gamma_{\varphi} (\mu - \mean{\mu} + \mean{\mu} + \chi (\sigma -1  + 1)) \dx} \leq C \left ( 1 + \abs{\mean{\mu}} + \norm{\nabla \mu}_{L^{2}} + \norm{\nabla \sigma}_{L^{2}} \right ) \\
& \quad \leq \frac{m_{0}}{4} \norm{\nabla \mu}_{L^{2}}^{2} + \norm{\nabla \sigma}_{L^{2}}^{2} + C \left ( 1 + \norm{\Psi(\varphi)}_{L^{1}} \right ),
\end{align*}
and we obtain from \eqref{Aux:CHD:Neumann:Energy:est}
\begin{align*}
& \frac{\dd}{\dt} \int_{\Omega} A \Psi(\varphi) + \frac{B}{2} \abs{\nabla \varphi}^{2} + \frac{Z}{2} \theta \abs{\sigma - 1}^{2} \dx \\
& \qquad + \frac{m_{0}}{4} \norm{\nabla \mu}_{L^{2}}^{2} + \frac{1}{K} \norm{\vec{v}}_{L^{2}}^{2} + \left ( Z - \frac{\chi^{2} m_{1}^{2}}{m_{0}^{2}} - 3 \right ) \norm{\nabla \sigma}_{L^{2}}^{2} + \frac{a}{2} \norm{p}_{L^{2}(\Gamma)}^{2} \\
& \quad \leq C \left ( 1 + Z^{2} + \norm{g}_{L^{2}(\Gamma)}^{2} + \norm{\Psi(\varphi)}_{L^{1}} + \norm{\nabla \varphi}_{L^{2}}^{2} \right ).
\end{align*}
Applying Gronwall's inequality leads to
\eqref{Robin:Est}, and the a priori estimate \eqref{Robin:Est:1} follows by applying \eqref{Neumann:mean:mu},  \eqref{Neumann:Psi':Psi} and the Poincar\'{e} inequality \eqref{Poincare} for $\mu$ and $\sigma - 1$.  The other a priori estimates \eqref{Robin:Est:2}, \eqref{Robin:Est:3} follow from a similar argument.  For the time derivative $\pd_{t}\varphi$, we note that $\nabla \varphi \cdot \vec{v} \in L^{\frac{8}{5}}(0,T;(H^{1})^{*})$ by \eqref{GN:nablavarphi:L3}, and so from \eqref{Weak:N:varphi} it holds that
\begin{align}\label{Neumann:timederivative:est}
\norm{\pd_{t}\varphi}_{L^{\frac{8}{5}}(0,T;(H^{1})^{*})} \leq C.
\end{align}
Together with \eqref{Dirichlet:Est:5}, the a priori estimates \eqref{Robin:Est:1}, \eqref{Robin:Est:2}, \eqref{Robin:Est:3} and \eqref{Neumann:timederivative:est}, and a Galerkin approximation are sufficient to deduce the existence of a quintuple $(\varphi^{\theta}, \mu^{\theta}, \sigma^{\theta}, \vec{v}^{\theta}, p^{\theta})$ satisfying the assertions of Lemma \ref{lem:Reg:problem:3}.  Furthermore,  by weak/weak* lower semi-continuity of the norms we obtain the estimate \eqref{Reg:problem:3:Est}, and by a weak comparison principle, it also holds that $0 \leq \sigma^{\theta} \leq 1$ a.e. in $Q$.
\end{proof}
For the proof of Theorem \ref{thm:Neumann} we pass to the limit $\theta \to 0$, using the estimate \eqref{Reg:problem:3:Est}.  We omit the details as it is a standard argument.

\end{document}